\documentclass[12pt]{amsart}
\usepackage{bbm}
\usepackage[pdftex]{graphicx}
\usepackage{amscd, amssymb, dsfont, upgreek, mathrsfs}
\input{xy}
\xyoption{all}

\newtheorem{Thm}{Theorem}

\newtheorem{Prop}[Thm]{Proposition}
\newtheorem{Def}[Thm]{Definition}
\newtheorem{Def/Thm}[Thm]{Definition/Theorem}

\newtheorem{Lemma}[Thm]{Lemma}

\theoremstyle{remark}
\newtheorem{Rmk}[Thm]{Remark}



\newcommand{\ti }{\times}
\newcommand{\ot }{\otimes}


\newcommand{\lann}{\langle\langle}
\newcommand{\rann}{\rangle\rangle}

\newcommand{\G}{{\bf G}}

\newcommand{\NN}{{\mathbb N}}

\newcommand{\PP }{{\mathbb P}}

\newcommand{\QQ }{{\mathbb Q}}
\newcommand{\CC }{{\mathbb C}}
\newcommand{\ZZ }{{\mathbb Z}}




\newcommand{\vir}{\mathrm{vir}}


\newcommand{\T}{{\mathsf{T}}}

\newcommand{\re}{\mathrm{e}}

\newcommand{\lan}{\langle}
\newcommand{\ran}{\rangle}
\newcommand{\lla}{\langle\!\langle}
\newcommand{\rra}{\rangle\!\rangle}

\newcommand{\pP}{\mathsf{P}}




\newcommand{\ppl}{{\mathsf{P}}\left[}
\newcommand{\ppr}{\right]}

\newcommand{\ret}{\re ^{\T}}

\begin{document}

\title[Vanishing of Gromov-Witten invariants of product of $\PP^1$]
{Vanishing of Gromov-Witten invariants of product of $\PP^1$}

\author{Hyenho Lho}
\address{Department of Mathematics, ETH Z\"urich}
\email {hyenho.lho@math.ethz.ch}

\begin{abstract} 
We prove some vanishing conditions on the Gromov-Witten invarists of $\underbrace{\PP^1 \ti \PP^1 \ti ... \ti \PP^1}_{n}$.

 \end{abstract}

\maketitle

\setcounter{tocdepth}{1} 
\tableofcontents

\setcounter{section}{-1}

\section{Introduction}

\subsection{Gromov-Witten invariants}

Denote by $\PP[n]$ the product of $n$ projective lines:

\begin{align*}
 \PP[n]:= \underbrace{\PP^1 \ti \PP^1 \ti ... \ti \PP^1}_{n}.
\end{align*}

\noindent Denote by 

\begin{align*}
 \overline{M}_{g,m}(\PP[n],\mathsf{d})
\end{align*}
the moduli space of stable maps to $\PP[n]$ with degree $\mathsf{d}=(d_1,d_2,...,d_n)$. It has canonical virtual fundamental class $[\overline{M}_{g,m}(\PP[n],\mathsf{d})]^{vir}$ of dimension $(1-g)(n-3)+m+2\sum_i d_i$. Let $\pi$ be the morphism to the moduli space of stable curves determined by the domain,

\begin{align*}
    \pi : \overline{M}_{g,n}(\PP[n],\mathsf{d}) \rightarrow \overline{M}_{g,n}.
\end{align*}

\noindent For $\mathsf{c}_k=(c_{1,k},c_{2,k},\dots,c_{n,k})\in \ZZ^n_2$, Gromov-Witten invariants are defined by

\begin{align*}
GW_{g,m,\mathsf{d}}^{\PP[n]}[\tau_{k_1}\mathsf{c}_1,\dots,\tau_{k_m}\mathsf{c}_m] :=\int_{[\overline{M}_{g,m}(\PP[n],\mathsf{d})]^{vir}}\prod^{m}_{i=1}\pi^*(\psi_{i})^{k_i} \text{ev}_{i}^*(\ot_{j=1}^n H_j^{c_{j,i}})\,. 
\end{align*}

Gromov-Witten invariants have been studied more than 20 years, see \cite{CKatz},\cite{FP} for an introduction to the subject.
In this paper, we prove the following vanishing conditions on Gromov-Witten invariants of $\PP[n]$.

\begin{Thm}\label{Main} We have
$$GW_{g,n,\mathsf{d}}^{\PP[n]}[\tau_{k_1}\mathsf{c}_1,\dots,\tau_{k_m}\mathsf{c}_m] = 0,$$ if the following conditions hold.

\begin{enumerate}
    \item[(i)] $(g-1+\sum_{k=1}^{m}c_{ik})$ is odd for $1 \le i \le n$,
    \item[(ii)] $3g-3+m-\sum_{i=1}^m k_i <n.$
\end{enumerate}

\end{Thm}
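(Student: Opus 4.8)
The plan is to exploit the product structure of $\PP[n]$ through Behrend's product formula, reducing the invariant to an integral over $\overline{M}_{g,m}$ of a product of $n$ partial Gromov--Witten classes, one for each $\PP^1$-factor, and then to run a codimension count in which conditions (i) and (ii) collide.

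First I would use that the moduli space of stable maps to a product is the fibre product of the factors over the stack $\mathfrak{M}_{g,m}$ of prestable curves,
\begin{align*}
\overline{M}_{g,m}(\PP[n],\mathsf{d}) \;=\; \overline{M}_{g,m}(\PP^1,d_1)\times_{\mathfrak{M}_{g,m}}\cdots\times_{\mathfrak{M}_{g,m}}\overline{M}_{g,m}(\PP^1,d_n),
\end{align*}
with virtual class the refined product $\Delta^{!}\bigl(\prod_j[\overline{M}_{g,m}(\PP^1,d_j)]^{\vir}\bigr)$ along the small diagonal. The integrand splits compatibly with this structure: each $\mathrm{ev}_i^*(\ot_{j}H_j^{c_{j,i}})=\prod_j\mathrm{ev}_i^*H_j^{c_{j,i}}$ is a product of classes pulled back from the individual factors, and --- crucially --- the descendant $\pi^*\psi_i$ is pulled back from the base rather than being a cotangent class of the stable-map space, so that no boundary correction terms appear. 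The projection formula then yields
\begin{align*}
GW_{g,m,\mathsf{d}}^{\PP[n]}[\tau_{k_1}\mathsf{c}_1,\dots,\tau_{k_m}\mathsf{c}_m] \;=\; \int_{\overline{M}_{g,m}}\Bigl(\prod_{i=1}^{m}\psi_i^{k_i}\Bigr)\cdot\prod_{j=1}^{n}\alpha_j,
\end{align*}
where, with $\pi_j$ the stabilization map and $a_j:=\sum_{i=1}^{m}c_{j,i}$,
\begin{align*}
\alpha_j \;:=\; \pi_{j*}\Bigl([\overline{M}_{g,m}(\PP^1,d_j)]^{\vir}\cap\prod_{i=1}^{m}\mathrm{ev}_i^*H_j^{c_{j,i}}\Bigr),
\end{align*}
regarded as a class in $H^{*}(\overline{M}_{g,m})$ via Poincar\'e duality on the smooth Deligne--Mumford stack $\overline{M}_{g,m}$, the factors being combined by the cup product. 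Making this factorization precise --- honestly tracking the $\mathfrak{M}_{g,m}$ versus $\overline{M}_{g,m}$ bookkeeping --- is the main technical obstacle; everything after it is a degree count.

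Next I would compute the codimension of each $\alpha_j$. Since $\overline{M}_{g,m}(\PP^1,d_j)$ has virtual dimension $2g-2+2d_j+m$ and we cap with a class of complex degree $a_j$ before pushing to the $(3g-3+m)$-dimensional base,
\begin{align*}
\codim_{\CC}\alpha_j \;=\; (3g-3+m)-(2g-2+2d_j+m-a_j)\;=\; g-1+a_j-2d_j.
\end{align*}
This is exactly where condition (i) enters: since $g-1+a_j$ is odd, $\codim_{\CC}\alpha_j$ is odd for every $j$, and hence whenever $\alpha_j\neq 0$ its codimension is a positive odd integer, so $\codim_{\CC}\alpha_j\ge 1$.

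Finally I would close with the dimension count. If the product $\prod_j\alpha_j$ is nonzero, then every factor is nonzero, whence $\sum_{j}\codim_{\CC}\alpha_j\ge n$ and
\begin{align*}
\codim_{\CC}\Bigl(\prod_{i}\psi_i^{k_i}\cdot\prod_{j}\alpha_j\Bigr)\;=\;\sum_{i}k_i+\sum_{j}\codim_{\CC}\alpha_j\;\ge\;\sum_{i}k_i+n.
\end{align*}
By condition (ii) we have $n>3g-3+m-\sum_i k_i$, so the right-hand side exceeds $3g-3+m=\dim_{\CC}\overline{M}_{g,m}$; the integrand then lies above the top degree and integrates to zero. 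If instead $\prod_j\alpha_j=0$ the integral vanishes trivially. In either case the invariant is zero, as claimed. I expect the only genuine work to be the first paragraph: checking that the product formula, together with the base descendants $\pi^*\psi_i$, delivers this clean factorization into $n$ odd-codimension classes with no correction terms, after which the parity input (i) and the dimension bound (ii) combine immediately.
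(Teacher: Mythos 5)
Your argument is correct, but it proves the theorem by a genuinely different route from the paper. The paper never touches the product structure of the target directly: it first replaces the Gromov--Witten invariants by quasimap invariants via the wall-crossing result of Ciocan-Fontanine--Kim, then runs Givental-style $\T$-fixed-point localization on $Q_{g,m}(\PP[n],\mathsf{d})$, and encodes each graph contribution as an element of a ``universal ring'' $\mathsf{R}$ with $\lambda_{k,j}^2=1$; condition (i) makes every contribution odd in each group of variables and condition (ii) bounds its ``length'' by $n-1$, so the sum over all sign choices $\lambda_{i,j}=\pm1$ vanishes term by term. You instead invoke Behrend's product formula --- equivalently, that the Gromov--Witten CohFT of $\PP[n]$ is the $n$-fold tensor product of that of $\PP^1$ --- to factor the integrand over $\overline{M}_{g,m}$ into $n$ classes $\alpha_j$ of pure complex codimension $g-1+a_j-2d_j$, and then conditions (i) and (ii) become a literal parity-plus-dimension count on $\overline{M}_{g,m}$. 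The combinatorial heart is parallel (each of the $n$ factors is forced to carry odd, hence positive, degree, and only $3g-3+m-\sum k_i$ units of degree are available), but your geometric input is much lighter: no quasimaps, no equivariant localization, and the argument visibly generalizes to products $X_1\times\cdots\times X_n$ whenever each factor's GW classes have controlled codimension parity (for $\PP^1$ this is automatic since $\int_\beta c_1(T_{\PP^1})=2d_j$ is even). Two small caveats: your displayed identification of $\overline{M}_{g,m}(\PP[n],\mathsf{d})$ with the fibre product over $\mathfrak{M}_{g,m}$ is not literally correct as stacks (a component can be contracted by one projection but not the other, so the individual projections need not be stable maps); this discrepancy is exactly what Behrend's paper resolves, and the identity you actually use --- the factorization of the pushed-forward classes on $\overline{M}_{g,m}$ --- is his theorem, so you should cite it rather than rederive it. And, as you note, the argument depends on the descendants being the ancestor classes $\pi^*\psi_i$ (as in the paper's definition); with cotangent classes on the map space the comparison terms would spoil the clean degree count.

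What the paper's heavier route buys is the intermediate statement $GW=QI$ for $\PP[n]$ and an explicit localization formula for the higher-genus quasimap series, which are of independent use in that author's program; what yours buys is brevity, independence from the quasimap machinery, and a transparent explanation of where conditions (i) and (ii) come from.
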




\
\subsection{Quasimap invariants and wall-crossing formula}

Let $\G:=(\CC^*)^n$ act on $(\CC^2)^n$ by the standard diagonal action componentwisly on each component $\CC^2$ so that its associated GIT quotient
\begin{align*}
 (\CC^2)^n/\!\!/\G = \PP[n].
\end{align*}

\noindent With this set-up, Ciocan-Fontanine and Kim defined the quasimap moduli space
\begin{align*}
 Q_{g,m}(\PP[n],\mathsf{d})
\end{align*}
with the canonical virtual fundamental class $[Q_{g,k}(\PP[n],\mathsf{d})]^{vir}$. See \cite{CK,CKM,MOP} for an introduction. For $\mathsf{c}_k=(c_{1,k},c_{2,k},\dots,c_{n,k})\in \ZZ^n_2$, we define quasimap invariants of $\PP[n]$ by
\begin{align*}
QI_{g,m,\mathsf{d}}^{\PP[n]}[\tau_{k_1}\mathsf{c}_1,\dots,\tau_{k_m}\mathsf{c}_m] :=\int_{[Q_{g,m}(\PP[n],\mathsf{d})]^{vir}}\prod^{m}_{i=1}\pi^*(\psi_{i})^{k_i} \text{ev}_{i}^*(\ot_{j=1}^n H_j^{c_{j,i}})\,. 
\end{align*}

In \cite{CKg}, the authors studied the relationship between quasimap invariants and Gromov-Witten invariants. By applying the general theorem to $\PP[n]$, we have the following proposition.
\begin{Prop}\em{(\cite{CKg})}\label{Wall} For $\mathsf{c}_k=(c_{1,k},c_{2,k},\dots,c_{n,k})\in \ZZ^n_2$, we have
 \begin{align*}
GW_{g,m,\mathsf{d}}^{\PP[n]}[\tau_{k_1}\mathsf{c}_1,\dots,\tau_{k_m}\mathsf{c}_m]=QI_{g,m,\mathsf{d}}^{\PP[n]}[\tau_{k_1}\mathsf{c}_1,\dots,\tau_{k_m}\mathsf{c}_m] 
\end{align*}
\end{Prop}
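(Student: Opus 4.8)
The plan is to obtain this identity as a special case of the genus-$g$ quasimap wall-crossing theorem of Ciocan-Fontanine and Kim \cite{CKg}. That theorem embeds both theories in a one-parameter family of $\ke$-stable quasimap invariants, with the Gromov-Witten invariants sitting at $\ke=\infty$ and the quasimap invariants at $\ke=0^+$, and it expresses the passage between these chambers in terms of the small $I$-function of the GIT target $\WmodG$. Crucially, their formula is written using descendents $\pi^*\psi_i$ pulled back along the domain-stabilization morphism $\pi$, which is exactly the form of the insertions appearing in our definitions of $GW$ and $QI$. So the first step is to record the small $I$-function of $\PP[n]$ and read off its wall-crossing contribution.

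Since the GIT presentation $(\CC^2)^n/\!\!/\G$ of $\PP[n]$ is a product, its small $I$-function factors as
\begin{align*}
 I^{\PP[n]}(q,z) = \prod_{j=1}^{n} \sum_{d_j \ge 0} \frac{q_j^{d_j}}{\prod_{k=1}^{d_j}(H_j + kz)^2},
\end{align*}
where $H_j$ is the hyperplane class of the $j$-th factor. Expanding in powers of $z^{-1}$, the degree-$d_j$ summand with $d_j\ge 1$ is $O(z^{-2d_j})$, in particular $O(z^{-2})$, so each factor is $1+O(z^{-2})$ and hence so is the product: $I^{\PP[n]}(q,z)=1+O(z^{-2})$. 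Equivalently, $\PP[n]$ is Fano with $c_1(T_{\PP[n]})\cdot\beta = 2\sum_i d_i \ge 2$ for every nonzero effective class $\beta$; this is the precise positivity needed to kill the $z^{-1}$-coefficient of $I$.

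The upshot is that the quantity governing the wall-crossing transformation in \cite{CKg} — the $z^{-1}$-part of the small $I$-function — vanishes identically in positive degree. The associated change of variables (the mirror map) is therefore the identity, so there are no quantum corrections relating the $\ke=0^+$ and $\ke=\infty$ chambers. Because the transformation acts trivially and the $\pi^*\psi$-descendents are preserved under it, each invariant matches its counterpart term by term, yielding
\begin{align*}
 GW_{g,m,\mathsf{d}}^{\PP[n]}[\tau_{k_1}\mathsf{c}_1,\dots,\tau_{k_m}\mathsf{c}_m] = QI_{g,m,\mathsf{d}}^{\PP[n]}[\tau_{k_1}\mathsf{c}_1,\dots,\tau_{k_m}\mathsf{c}_m].
\end{align*}

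The main point to verify carefully is that $\PP[n]$ genuinely meets the semipositivity hypotheses under which \cite{CKg} applies, and, more delicately, that the vanishing of the $z^{-1}$-coefficient is exactly the condition rendering their wall-crossing map the identity on the full descendent potential — so that the presence of the $\pi^*\psi$-classes introduces no additional corrections. Granting that dictionary, the equality is immediate from the positivity computation above.
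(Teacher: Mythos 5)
Your proposal is correct and matches the paper's approach: the paper simply invokes the general semi-positive wall-crossing theorem of \cite{CKg}, and your verification that the small $I$-function of $\PP[n]$ satisfies $I_0=1$ and $I_1=0$ (since $c_1\cdot\beta=2\sum_i d_i\ge 2$ in positive degree), so that the wall-crossing transformation is trivial on the $\pi^*\psi$-descendent invariants, is exactly the specialization the paper leaves implicit.
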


\
\

\noindent i.e. quasimap invariants and Gromov-Witten invariants are same for $\PP[n]$. In section 3, we prove the following theorem which is equivalent to Theorem \ref{Main} by Proposition \ref{Wall}.
\begin{Thm}\label{Second}
We have
$$QI_{g,n,\mathsf{d}}^{\PP[n]}[\tau_{k_1}\mathsf{c}_1,\dots,\tau_{k_m}\mathsf{c}_m] = 0,$$ if the following conditions hold.

\begin{enumerate}
    \item[(i)] $(g-1+\sum_{k=1}^{m}c_{ik})$ is odd for $1 \le i \le n$,
    \item[(ii)] $3g-3+m-\sum_{i=1}^m k_i <n.$
\end{enumerate}
\end{Thm}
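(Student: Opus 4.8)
The plan is to reduce the invariant to a codimension count on $\overline{M}_{g,m}$ by exploiting that the target is a product. Since $\G=(\CC^*)^n$ acts on $W=(\CC^2)^n$ one factor at a time, a quasimap to $\PP[n]$ of degree $\mathsf{d}=(d_1,\dots,d_n)$ is a prestable pointed curve $(C,x_1,\dots,x_m)$ equipped, for each $j$, with a degree-$d_j$ line bundle $L_j$ together with a pair of sections of $L_j$; hence
$$Q_{g,m}(\PP[n],\mathsf{d})=Q_{g,m}(\PP^1,d_1)\ti_{\fM_{g,m}}\cdots\ti_{\fM_{g,m}}Q_{g,m}(\PP^1,d_n)$$
is a fiber product over the smooth Artin stack $\fM_{g,m}$ of prestable curves, and its relative perfect obstruction theory is the direct sum of the $n$ factor theories $R\pi_*(L_j^{\oplus2})$. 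First I would use this to identify $[Q_{g,m}(\PP[n],\mathsf{d})]^{\vir}$ with the product of the relative virtual classes of the factors, recording that the $j$-th factor has relative virtual dimension $\rho_j=(g-1)+2\chi(L_j)=2d_j+1-g$ over $\overline{M}_{g,m}$, the summand $g-1$ coming from the Picard directions; as a check, $\sum_j\rho_j+(3g-3+m)$ recovers the full virtual dimension.

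Next I would push forward along $\pi:Q_{g,m}(\PP[n],\mathsf{d})\to\overline{M}_{g,m}$. Since $\mathrm{ev}_i^*(\ot_{j}H_j^{c_{j,i}})=\prod_j(\mathrm{ev}_i^{(j)})^*H^{c_{j,i}}$ depends on the $j$-th factor only through that factor's evaluation maps, and since the classes $\psi_i$ are pulled back from the base, the projection formula together with the product formula for pushforward along a fiber product over $\fM_{g,m}$ yields
$$QI_{g,m,\mathsf{d}}^{\PP[n]}[\tau_{k_1}\mathsf{c}_1,\dots,\tau_{k_m}\mathsf{c}_m]=\int_{\overline{M}_{g,m}}\Big(\prod_{i=1}^m\psi_i^{k_i}\Big)\cdot\prod_{j=1}^n\alpha_j,$$
where $\alpha_j:=\pi^{(j)}_*\big([Q_{g,m}(\PP^1,d_j)]^{\vir}\cap\prod_{k=1}^m(\mathrm{ev}_k)^*H^{c_{j,k}}\big)$ is a class of pure codimension $q_j:=\sum_{k=1}^m c_{j,k}-\rho_j=\sum_{k=1}^m c_{j,k}-(2d_j+1-g)$ on $\overline{M}_{g,m}$.

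The two hypotheses then enter purely through degrees. Reducing modulo $2$ gives $q_j\equiv\sum_{k}c_{j,k}+g+1$, so condition (i)—that $g-1+\sum_k c_{j,k}$ be odd for every $j$—is exactly the assertion that each $q_j$ is odd. An odd integer is never zero: for each $j$ either $q_j<0$, forcing $\alpha_j=0$ and hence the vanishing of the whole invariant, or $q_j\ge1$, so that $\alpha_j$ has positive codimension. In the latter case $\prod_j\alpha_j$ has codimension at least $n$, so the integrand $\big(\prod_i\psi_i^{k_i}\big)\prod_j\alpha_j$ has codimension at least $\sum_i k_i+n$. Condition (ii), that $3g-3+m-\sum_i k_i<n$, says precisely that $\sum_i k_i+n>3g-3+m=\dim\overline{M}_{g,m}$, so the integrand sits strictly above the top degree and the integral vanishes.

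The main obstacle is the first step: making rigorous the splitting of the virtual class under the fiber product and the ensuing product formula for $\pi_*$. Concretely, one must show that the relative obstruction theory over the smooth stack $\fM_{g,m}$ decomposes as the orthogonal direct sum of the $n$ single-factor theories, so that $[Q_{g,m}(\PP[n],\mathsf{d})]^{\vir}$ is the refined product of the $[Q_{g,m}(\PP^1,d_j)]^{\vir}$, and that pushforward along a fiber product over a smooth base turns this product of relative virtual classes into the cup product $\prod_j\alpha_j$; I would obtain both from the functoriality of virtual pullback. A separate, minor point is the existence of $\pi$ and of the $\psi_i$, which presumes $2g-2+m>0$; the finitely many unstable $(g,m)$ can be checked directly. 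Everything past the product formula is the parity-and-dimension bookkeeping above, with (i) governing the parity of each $\codim\alpha_j$ and (ii) providing the dimension overflow.
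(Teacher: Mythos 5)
Your route is genuinely different from the paper's. The paper stays entirely in equivariant land: it runs Givental-style torus localization on $Q_{g,0}(\PP[n],\mathsf{d})$, organizes the fixed loci into decorated graphs, bounds the ``type'' of each vertex, edge and leg contribution in a universal ring where the weights satisfy $\lambda^2=1$ (Proposition 16), and kills each graph by the parity Lemma 10. You instead try to split the target and reduce everything to a codimension count on $\overline{M}_{g,m}$. Your endgame is correct and clean: granting the product formula, $q_j\equiv g-1+\sum_k c_{j,k}\pmod 2$, so condition (i) makes every $q_j$ odd hence nonzero, and condition (ii) then pushes the integrand strictly past $\dim\overline{M}_{g,m}$.

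The gap is the first step. The identity $Q_{g,m}(\PP[n],\mathsf{d})=Q_{g,m}(\PP^1,d_1)\ti_{\fM_{g,m}}\cdots\ti_{\fM_{g,m}}Q_{g,m}(\PP^1,d_n)$ is false: quasimap stability for the product target asks that $\deg(L_1\otimes\cdots\otimes L_n)$ be positive on every rational component carrying exactly two special points, whereas membership in the fiber product would require each $\deg L_j$ separately to be positive there. Concretely, take $g=0$, a chain $C_1\cup C_2$ with markings $x_1,x_2\in C_1$, $x_3\in C_2$, and $L_1$, $L_2$ of multidegrees $(1,0)$ and $(0,1)$: this is a stable quasimap to $\PP^1\ti\PP^1$, but its first factor is unstable on $C_2$ and lies outside $Q_{0,3}(\PP^1,1)$. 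So what you actually need is a quasimap analogue of Behrend's product formula --- an identity of virtual classes under the comparison map to the fiber product, proved by reconciling obstruction theories across the \emph{different} stabilizations --- and that does not follow from functoriality of virtual pullback alone, nor is it available in the quasimap literature. The natural repair inside this paper's own framework is to cross the wall first: by Proposition \ref{Wall} it suffices to prove the vanishing for Gromov--Witten invariants, where Behrend's product formula for stable maps to a product \emph{is} a theorem; since the $\psi$-classes in the invariants are already pulled back from $\overline{M}_{g,m}$, your parity-and-dimension bookkeeping then goes through verbatim and yields Theorem 1 (hence Theorem 3) without localization. As written, however, the proposal rests on a false isomorphism of moduli spaces and an unproved product formula, so it is not yet a proof.
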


To prove Theorem \ref{Second}, we apply the localization strategy introduced by Givental \cite{Elliptic,SS,Book} for Gromov-Witten thoery to the quasimap theory of $\PP[n]$.  

\subsection{Acknowledgments} 
I am very grateful to  I.~Ciocan-Fontanine, B.~Kim, R.~Pandharipande and J.~Shen
for discussions over the years 
about the moduli space of quasimaps and the Gromov-Witten invariants. I was supported by the grant ERC-2012-AdG-320368-MCSK. 






\

\section{Preliminaries}

\subsection{$\mathsf{T}$-equivariant theory}
Let $\mathsf{T}:=(\CC^*)^{2n}$ act on $\PP[n]$ standardly on each components. Denote by $$\boldsymbol{\lambda}=(\lambda_1,\overline{\lambda}_1,\lambda_2,\overline{\lambda}_2,...,\lambda_n,\overline{\lambda}_n)$$ $\mathsf{T}$-equivariant parameters. We will use following specializations throughout the paper;
\begin{align*}
 \lambda_k+\overline{\lambda}_k=0 \, , \  \text{for} \, 1 \le k \le n.
\end{align*}

First we set the notation for the cohomology basis and its dual basis. Let $\{p_i\}$ be the set of $\mathsf{T}$-fixed points of $\PP[n]$ and let $\phi_i$ be the basis of $H^*_\mathsf{T}(\PP[n])$ defined by satisfying followings:
\begin{align}\label{Ebasis}
 \phi_i |_{p_j} = \left\{ \begin{array}{rl} 1 & \text{if } i=j  \\
                                    0 & \text{if } i\ne j  \, . \end{array}\right.
\end{align}
 
\noindent Let $\phi ^i$ be the dual basis with respect to the $\T$-equivariant Poincar\'e pairing, i.e.,
\[ \int _Y \phi _i \phi ^j  = \left\{ \begin{array}{rl} 1 & \text{if } i=j  \\
                                    0 & \text{if } i\ne j   \, ,\end{array}\right.  \]

\noindent For each $\mathsf{T}$-fixed point $p_i\in\PP[n]$, Let
 $$e_i=e(T_{p_i}(\PP[n])) $$
 be the equivariant Euler class of the tangent space of $\PP[n]$ at $p_i$.




The action of $\mathsf{T}$ on $\PP[n]$ lifts to  $Q_{g,0}(\PP[n],\mathsf{d})$. The localization formula of \cite{GP} applied to the virtual fundamental class $[Q_{g,0}(\PP[n],\mathsf{d})]^{\text{vir}}$ will play a fundamental fole in our paper. The $\mathsf{T}$-fixed loci are represented in terms of dual graphs, and the contributions of the $\mathsf{T}$-fixed loci are given by tautologicla classes. The formulas are standard, see \cite{KL,MOP}.

\

\subsection{genus 0 invariants}

In this section, we review the genus zero theory. Integrating along the virtual fundamental class $$[Q_{0, k}(\PP[n], \mathsf{d})]^{\vir}$$ we define correlators
$\lan ... \ran _{0, k, \mathsf{d}}^{0+}$ as follows. For $\gamma _i \in H^*_\T(\PP[n])\ot \QQ (\boldsymbol{\lambda} )$,
\[ \lan \gamma _1\psi  ^{a_1} , ..., \gamma _k\psi  ^{a_k} \ran _{0, k, \mathsf{d}}^{0+} = 
\int _{[Q_{0, k}(\PP[n],\mathsf{d})]^{\vir}} \prod _i ev_i^*(\gamma _i)\psi _i ^{a_i} ,\]
where $\psi _i$ is the psi-class associated to the $i$-th marking and $ev_i$ is the 
$i$-th evaluation map.

Let $Q_{g,k}(\PP[n],\mathsf{d})^{\T, p_i}$ be the fixed loci of $Q_{g,k}(\PP[n],\mathsf{d})$ whose elements have domain components only over $p_i$.  Integrating along the localized cycle class

\begin{align*}
 \frac{[Q_{0, k} (\mathsf{d}) ^{\T, p_i}]^{\vir}}{\ret (N^{\vir}_{Q_{0, k} (\PP[n],\mathsf{d}) ^{\T, p_i}/
Q_{0, k}(\PP[n],\mathsf{d})} )}
\end{align*}
we also define local correlators $\lan ... \ran _{0, k, \beta}^{0+, p_i}$ and  $\lla ... \rra _{0, k, \beta}^{0+, p_i}$ as follows:
\begin{align*}  & \lan \gamma _1\psi  ^{a_1} , ..., \gamma _k\psi  ^{a_k} \ran _{0, k, \mathsf{d}}^{0+, p_i}  =
\int _{\frac{[Q_{0, k} (\PP[n],\mathsf{d}) ^{\T, p_i}]^{\vir}}{\ret (N^{\vir}_{Q_{0, k} (\PP[n],\mathsf{d}) ^{\T, p_i}/
Q_{0, k}(\PP[n],\mathsf{d})} )}} \prod _i ev_i^*(\gamma _i)\psi _i ^{a_i} \ \ ; \\
&  \lla \gamma _1\psi  ^{a_1} , ..., \gamma _k\psi  ^{a_k} \rra _{0, k}^{0+, p_i} \\
& = \sum _{m, \mathsf{d}} \frac{\mathsf{q}^{\mathsf{d}}}{m!}
 \lan    \gamma _1\psi  ^{a_1} , ..., \gamma _k\psi  ^{a_k} , \mathsf{t}, ..., \mathsf{t}  \ran_{0, k+m, \beta}^{0+, p_i},
      \end{align*} 
      where $\mathsf{q}=(q_1,\dots,q_n)$ are formal Novikov variables and $\mathsf{t}=\sum_{i}t_i H_i \in H^*_\mathsf{T}(\PP[n])$ where $H_i$ is the pull back of hyperplane class in $i$-th $\PP^1$.

\

\subsection{insertions of 0+ weighted marking}
For the explicit calculations of various genus $0$ invariants, we need to introduce the notion of 0+ weighted marking. We briefly recall the definitions from \cite{BigI}.

Denote by

\begin{align*}
  Q^{0+,0+}_{g,k|m}(\PP[n],\mathsf{d})
\end{align*}

\noindent the moduli space of genus $g$ (resp. genus zero), degree $\mathsf{d}$ stable quasimaps to $\PP[n]$ with ordinary $k$ pointed markings and infinitesimally weighted $m$ pointed markings, see \cite{BigI} for more explanations.

Denote by

\begin{align*}
  Q^{0+,0+}_{g,k|m}(\PP[n],\mathsf{d})^{T,p_i} 
\end{align*}

\noindent the T-fixed part of $Q^{0+,0+}_{g,k|m}(\PP[n],\mathsf{d})$, whose domain components of universal curves are only over $p_i$.

\

Let $\tilde{H}_i \in H^*([(\CC^2)^n/\G])$ be a lift of $H_i \in H^*(\PP[n])$, i.e. , $\tilde{H}_i|_{\PP[n]}=H_i$. For $\gamma_i \in H^*_{\T} (\PP[n])\ot \QQ (\boldsymbol{\lambda} )$, $\mathsf{t} = \sum_i t_i \tilde{H}_i, \delta _j \in H^*_{\T} ([(\CC^2)^n/\G], \QQ )$, denote

\begin{align*}  & \lan \gamma _1\psi  ^{a_1} , ..., \gamma _k\psi  ^{a_k} ;  \delta _1, ..., \delta _m \ran _{0, k|m, \mathsf{d}}^{0+, 0+}  \\ & =
\int _{[Q^{0+, 0+}_{0, k|m} (\PP[n],\mathsf{d}) ^{\T}]^{\vir}} \prod _i ev_i^*(\gamma _i)\psi _i ^{a_i} \prod _j \hat{ev}_j ^* (\delta _j) \ \ ; \\
&  \lla \gamma _1\psi  ^{a_1} , ..., \gamma _k\psi  ^{a_k} \rra _{0, k}^{0+, 0+} \\
& = \sum _{m, \beta} \frac{\mathsf{q}^{\mathsf{d}}}{m!}
 \lan    \gamma _1\psi  ^{a_1} , ..., \gamma _k\psi  ^{a_k} ; \mathsf{t}, ..., \mathsf{t}  \ran_{0, k|m, \mathsf{d}}^{0+, 0+} \ \  ; \\ 
 & \lan \gamma _1\psi  ^{a_1} , ..., \gamma _k\psi  ^{a_k} ;  \delta _1, ..., \delta _m \ran _{0, k|m, \mathsf{d}}^{0+, 0+, p_i}  \\ & =
\int _{\frac{[Q^{0+, 0+}_{0, k|m} (\PP[n], \mathsf{d}) ^{\T, p_i}]^{\vir}}{\ret (N^{\vir}_{Q^{0+, 0+}_{0, k|m} (\PP[n], \mathsf{d}) ^{\T, p_i}/
Q^{0+, 0+}_{0, k|m}(\PP[n], \mathsf{d})} )}} \prod _i ev_i^*(\gamma _i)\psi _i ^{a_i} \prod _j \hat{ev}_j ^* (\delta _j) \ \  ; \\
&  \lla \gamma _1\psi  ^{a_1} , ..., \gamma _k\psi  ^{a_k} \rra _{0, k}^{0+, 0+, p_i} \\
& = \sum _{m, \mathsf{d}} \frac{\mathsf{q}^{\mathsf{d}}}{m!}
 \lan    \gamma _1\psi  ^{a_1} , ..., \gamma _k\psi  ^{a_k} ; \mathsf{t}, ..., \mathsf{t}  \ran_{0, k|m, \mathsf{d}}^{0+, 0+, p_i} \ \ ,
\end{align*}
where $\hat{ev}_j$ is the evaluation map to $[(\CC^2)^n/\mathbf{G}]$ at the $j$-th infinitesimally weighted  marking.






Let $\{ \phi_i \}$ be the equivariant basis of $H^*_\mathsf{T}(\PP[n])$ satisfying \eqref{Ebasis}. Let us define $\mathds{U}_i$, $\mathds{S}$, $\mathds{V}_{ij}$ by

\begin{align*}
\mathds{U}^{\PP[n]}_{p_i} & : = \lla 1,1 \rra ^{0+,0+,i}_{0,2}\in \QQ[[\mathsf{q},\mathsf{t}]](\boldsymbol{\lambda})\  \ ;   \\
\mathds{S}^{\PP[n]}(\gamma) & : =  \sum _{i, \beta}  \phi ^i \mathsf{q}^\mathsf{d} \lla \frac{\phi _i}{z-\psi} , \gamma \rra _{0, 2, \mathsf{d}}^{0+,0+}\in H^*_{\mathsf{T}}(\PP[n])[[z^{-1},\mathsf{q},\mathsf{t}]]\ot\QQ(\boldsymbol{\lambda})\ \ ;  \\
\mathds{V}^{\PP[n]}_{ij} (x, y)  & := \sum_{\mathsf{d} } \mathsf{q}^\mathsf{d}  
\lla \frac{\phi _i}{x- \psi } ,  \frac{\phi _j}{y - \psi } \rra _{0, 2, \mathsf{d}}^{0+,0+}\in \QQ[[x^{-1},y^{-1},\mathsf{q},\mathsf{t}]](\boldsymbol{\lambda})  \ \ ; \\
 \end{align*}

We recall the following WDVV equation from \cite{CKg},

\begin{align}\label{WDVV}
 e_i\mathds{V}_{ij}(x,y)e_j=\frac{\sum_k\mathds{S}(\phi_k)|_{p_i, z=x} \mathds{S}(\phi^k)|_{p_j, z=y}}{x+y}
\end{align}

To study the properties of $\mathds{S}$ and $\mathds{V}_{ij}$, we need to recall infinitesimal $I$-function defined in \cite{BigI}. Denote by $\mathds{I}$ the infinitesimal $I$-function $\mathds{J}^{0+,0+}$ in \cite{BigI}. Note that $\mathds{S}$ coincide with the definition of infinitesimal $S$-operator $\mathds{S}^{0+,0+}$ in \cite{BigI}. From (5.1.3) in \cite{BigI}, we know explicit form of $\mathds{I}$ of $\PP[n]$

\begin{align}\label{I}
 \mathds{I}^{\PP[n]}  &= \prod_{i=1}^n e^{t_i(H_i+d_iz)/z}\frac{q_i^{d_i}}{\prod_{k=1}^{d_i} (H_i-\lambda_i+kz)(H_i-\overline{\lambda}_i+kz)}\\
 \nonumber &\in \mathsf{R}[[z,z^{-1},\mathsf{q},\mathsf{t}]](\mathsf{\lambda}).
\end{align}


To find the explicit forms of $\mathds{S}$-operators, we recall following proposition from \cite{KL}.
 
\begin{Prop}\em{(Proposition 2.4 in \cite{KL})}\label{S}  There are unique coefficients $ a_i (z, \mathsf{q}) \in \QQ (\lambda)[z][[\mathsf{q}]]$ making
 \[ \sum _{i} a_i (z, \mathsf{q}) \partial _{\phi_i} \mathds{I}^{\PP[n]}  = \gamma + O(1/z) .\] Furthermore
 LHS coincides with  $\mathds{S}^{\PP[n]}(\gamma)$.
\end{Prop}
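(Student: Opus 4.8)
The plan is to establish both the existence/uniqueness of the coefficients $a_i(z,\mathsf{q})$ and then identify the resulting sum with $\mathds{S}^{\PP[n]}(\gamma)$. For the first part, I would work with the explicit form of $\mathds{I}^{\PP[n]}$ given in \eqref{I}. The key structural observation is that applying the derivations $\partial_{\phi_i}$ and multiplying by polynomial coefficients $a_i(z,\mathsf{q})$ produces an element of $H^*_{\mathsf{T}}(\PP[n])[[z^{-1},\mathsf{q},\mathsf{t}]]\ot\QQ(\boldsymbol{\lambda})$ whose leading term in $z$ (the $z^0$ part modulo $O(1/z)$) can be read off explicitly. First I would expand $\mathds{I}^{\PP[n]}$ as $\one + O(1/z)$ at $\mathsf{q}=0$ and observe that the operators $\{\partial_{\phi_i}\mathds{I}^{\PP[n]}\}$ form, order by order in $\mathsf{q}$, a basis of the relevant module whose transition matrix to the constant basis is unitriangular in the $\mathsf{q}$-filtration. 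This unitriangularity is what forces both existence and uniqueness: at $\mathsf{q}=0$ the system $\sum_i a_i\,\partial_{\phi_i}\mathds{I}^{\PP[n]}=\gamma+O(1/z)$ is upper-triangular and solvable, and one then solves recursively in powers of $\mathsf{q}$, at each stage inverting only the $\mathsf{q}=0$ leading term.

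For the identification with $\mathds{S}^{\PP[n]}(\gamma)$, I would use the characterization of the $\mathds{S}$-operator that is built into its definition together with the defining property of the infinitesimal $I$-function $\mathds{I}=\mathds{J}^{0+,0+}$ from \cite{BigI}. Concretely, $\mathds{S}^{\PP[n]}(\gamma)$ satisfies a string of divisor/dilaton-type relations making $\mathds{S}^{\PP[n]}(\phi_k)$ a flat section of the Dubrovin connection, and the span of $\{\partial_{\phi_i}\mathds{I}^{\PP[n]}\}$ is exactly the space of such flat sections. Since both sides lie in this span and both have the same $z^0$-leading behavior $\gamma+O(1/z)$, uniqueness from the first part pins them down to be equal. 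The cleanest route is to verify that $\mathds{S}^{\PP[n]}(\gamma)$ itself admits an expansion $\sum_i \tilde a_i(z,\mathsf{q})\,\partial_{\phi_i}\mathds{I}^{\PP[n]}$ with $\tilde a_i \in \QQ(\lambda)[z][[\mathsf{q}]]$ and normalization $\gamma+O(1/z)$, which by uniqueness gives $\tilde a_i = a_i$.

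I expect the main obstacle to be the normalization/asymptotic analysis: controlling precisely the $z\to\infty$ behavior of $\partial_{\phi_i}\mathds{I}^{\PP[n]}$ after restriction to the fixed points $p_j$, so as to see that the leading term is genuinely $\gamma+O(1/z)$ and that the coefficients $a_i$ are polynomial in $z$ rather than merely rational. The factors $e^{t_i(H_i+d_iz)/z}=e^{t_iH_i/z}e^{t_id_i}$ contribute the $e^{t_id_i}$ piece to the $\mathsf{q}^{\mathsf{d}}$-coefficient, so the polynomiality in $z$ must come from a cancellation between the exponential's $1/z$-expansion and the denominators $\prod_k(H_i-\lambda_i+kz)(H_i-\overline\lambda_i+kz)$; checking this cancellation under the specialization $\lambda_k+\overline\lambda_k=0$ is the delicate point. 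Once the asymptotics are under control, the remainder of the argument is formal: the recursion in $\mathsf{q}$ and the appeal to uniqueness are routine.
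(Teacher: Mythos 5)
The paper gives no proof of this statement---it is imported verbatim as Proposition~2.4 of \cite{KL}---and your two-step argument (a $\mathsf{q}$-adic Birkhoff-type recursion for existence and uniqueness of the $a_i$, using $\partial_{\phi_i}\mathds{I}^{\PP[n]}=\phi_i+O(1/z)$ modulo $\mathsf{q}$, followed by identification with $\mathds{S}^{\PP[n]}(\gamma)$ because the derivatives $\partial_{\phi_i}\mathds{I}^{\PP[n]}$ span the same $\QQ(\lambda)[z][[\mathsf{q}]]$-module of solutions of the quantum differential equation as the $\mathds{S}^{\PP[n]}(\phi_j)$, which rests on $\mathds{I}=\mathds{J}^{0+,0+}$ lying on the Lagrangian cone of the $(0+,0+)$-theory per \cite{BigI}) is exactly the argument of the cited source. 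The only adjustment I would make is to your closing worry: polynomiality of the $a_i$ in $z$ is automatic from the recursive construction, since at each order in $\mathsf{q}$ one simply takes the $z$-polynomial truncation of the discrepancy (each $\mathsf{q}^{\mathsf{d}}$-coefficient of $\partial_{\phi_i}\mathds{I}^{\PP[n]}$ has bounded positive powers of $z$), rather than from a delicate cancellation in the explicit formula \eqref{I}.
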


\

\subsection{Ordered graphs.} Let the genus $g$ and the number of markings $n$ for the moduli space be in the stable range
$$2g-2+n > 0. $$
We can organize the $\mathsf{T}$-fixed loci of $Q_{g,n}(\PP[n],\mathsf{d})$ according to decorated graphs.
A {\em decorated graph} $\Gamma$ consists of the data $(\mathsf{V},\mathsf{E},\mathsf{N},\mathsf{g})$ where
\begin{enumerate}
 \item[(i)] $\mathsf{V}$ is the vertex set,
 \item[(ii)] $\mathsf{E}$ is the edge set (including possible self-edge),
 \item[(iii)] $\mathsf{N} : \{1,2,\dots,n \}\rightarrow \mathsf{V}$ is the marking assignment,
 \item[(iv)] $\mathsf{g}:\mathsf{V}\rightarrow \ZZ_{\ge 0}$ is a genus assignment satisfying
 \begin{align*}
     g=\sum_{v\in\mathsf{V}}\mathsf{g}(v)+h^1(\Gamma).
 \end{align*}
\end{enumerate}
The markings $\mathsf{L}=\{1,\ldots,n\}$ are often called {\em legs}.

Let $\mathsf{G}_{g,n}$ be the set of decorated graph as defined as above. The {\em flags} of $\Gamma$ are the 
half-edges{\footnote{In our convention, markings are not flags.}}. Let $\mathsf{F}$ be the set of flags. For each $\Gamma \in \mathsf{G}_{g,n}$, we choose an ordering 
\begin{align}\label{Ordering}
    \nu_{\Gamma} :\mathsf V\rightarrow \{1,2,\dots,|\mathsf{V}| \}.
\end{align}
Let $\mathsf{G}^{\text{ord}}_{g,n}$ be the set of decorated graph with fixed choice of orderings on vertices.
We will sometimes identify $\mathsf{V}$ with $\{1,2,\dots,|\mathsf{V}|\}$ by \eqref{Ordering}.

\

\subsection{Universal ring}

Denote by 
$$\mathsf{R}$$
the ring generated by 
$$\lambda_{1,j}^{\pm 1}, \lambda_{2,j}^{\pm 1},\dots,\lambda_{n,j}^{\pm 1} ,\ j \in \NN$$
with relations
\begin{align}\label{Relations}
\lambda_{k,j}^2=1 ,\  \text{for} ,\ 1\le k\le n,\ j\in \NN.
\end{align}

\begin{itemize}
 \item 
 A monomial in $\mathsf{R}$ is called {\em canonical} if degree of $\lambda_{i,j}$ are $1$ or $0$ for all $1 \le i \le n$, $j\in \NN $.
 \item 
 {\em The length} of canonical monomial is the number of $\lambda_{i,j}$ whose degree is $1$.
 \item 
 For a monomial $m\in \mathsf{R}$, {\em the length} of $m$ is the length of unique canonical monomial equal to $m$ in $\mathsf{R}$.  
\end{itemize}

\noindent The ring $\mathsf{R}$ will play a fundamental role in the proof of the main theorem.

\begin{Def}
 For monomial $m\in\mathsf{R}$, we say $m$ has type $(\mathsf{a};b)=(a_1,a_2,\dots,a_n;b)\in\NN^{n}\ti\NN$ if $m$ has following form;
 
 \begin{align*}
 m= \prod_j \prod_i \lambda_{i,j}^{a_{i,j}}f
 \end{align*}
where $\sum_j a_{i,j}=a_i$ and $f$ is a monomial whose length is less than or equal to $b$. 

\end{Def}

\begin{Def}
For a polynomial $f\in\mathsf{R}$, we say $f$ has type $(\mathsf{a};b)$ if $f$ is a sum of monomials of type $(\mathsf{a};b)$.
\end{Def}

The following two lemmas are easy to check.

\begin{Lemma}\label{multiplicative}
 If $m_1$ has type $(\mathsf{a}_1;b_1)$ and $m_2$ has type $(\mathsf{a}_2;b_2)$, then $m_1 m_2$ has type $(\mathsf{a}_1+\mathsf{a}_2;b_1+b_2)$.
\end{Lemma}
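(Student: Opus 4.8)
The plan is to unwind the definition of type directly and reduce the statement to a subadditivity property of the length function. First I would fix factorizations realizing the given types: by definition I can write
$$ m_1 = \prod_j\prod_i \lambda_{i,j}^{a_{i,j}}\, f_1, \qquad m_2 = \prod_j\prod_i \lambda_{i,j}^{a'_{i,j}}\, f_2, $$
where $\sum_j a_{i,j} = (\mathsf{a}_1)_i$ and $\sum_j a'_{i,j} = (\mathsf{a}_2)_i$ for each $i$, and where $f_1$, $f_2$ are monomials of length at most $b_1$ and $b_2$ respectively.

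Next, using that $\mathsf{R}$ is commutative I would collect the prefactors into a single product,
$$ m_1 m_2 = \prod_j \prod_i \lambda_{i,j}^{a_{i,j}+a'_{i,j}}\, (f_1 f_2), $$
and observe that $\sum_j (a_{i,j} + a'_{i,j}) = (\mathsf{a}_1)_i + (\mathsf{a}_2)_i = (\mathsf{a}_1 + \mathsf{a}_2)_i$ for every $i$. Thus $m_1 m_2$ already has the shape required by the definition of type $(\mathsf{a}_1 + \mathsf{a}_2;\, b_1 + b_2)$, provided the remaining factor $f_1 f_2$ has length at most $b_1 + b_2$.

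The one genuine point is therefore the subadditivity of length: for any monomials $f, f'$ one has $\mathrm{length}(f f') \le \mathrm{length}(f) + \mathrm{length}(f')$. I would prove this by a parity argument built on the relations $\lambda_{k,j}^2 = 1$. Passing to the canonical form of a monomial replaces each exponent by its residue modulo $2$, so the length of a monomial equals the cardinality of its \emph{odd support}, the set of pairs $(i,j)$ at which $\lambda_{i,j}$ appears with odd exponent. The exponent of $\lambda_{i,j}$ in $f f'$ is the sum of its exponents in $f$ and in $f'$, and a sum of two integers is odd only if at least one summand is odd; hence the odd support of $f f'$ is contained in the union of the odd supports of $f$ and $f'$. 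Taking cardinalities yields the inequality, and specializing to $f = f_1$, $f' = f_2$ gives $\mathrm{length}(f_1 f_2) \le b_1 + b_2$, completing the argument.

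Since every step amounts to bookkeeping with exponents together with one elementary parity observation, I do not expect any serious obstacle. The only thing to handle with care is to phrase length intrinsically as the size of the odd support, so that the inclusion of supports under multiplication delivers subadditivity cleanly; once that reformulation is in place the lemma follows immediately.
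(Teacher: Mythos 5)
Your proof is correct. The paper offers no argument for this lemma at all (it is dismissed with ``the following two lemmas are easy to check''), and your write-up supplies exactly the verification one would want: collecting the $\lambda_{i,j}$ prefactors by commutativity handles the $\mathsf{a}_1+\mathsf{a}_2$ part, and the reformulation of length as the cardinality of the odd support --- so that multiplication can only shrink the odd support relative to the union, since an odd sum of exponents forces at least one odd summand --- gives the subadditivity $\mathrm{length}(f_1f_2)\le b_1+b_2$ needed for the second component of the type.
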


\begin{Lemma}\label{ev} If $f \in \mathsf{R}$ has type $(\mathsf{a}:b)$ with odd $a_i$ for $0 \le i \le n$ and $b<n$,

 \begin{align*}
 \sum_{\lambda_{i,j}=\pm 1} f = 0.
 \end{align*}
 
\end{Lemma}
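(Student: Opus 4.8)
The plan is to reduce to a single monomial and then run a parity count modulo $2$. By linearity of the substitution-and-sum operation $f \mapsto \sum_{\lambda_{i,j}=\pm 1} f$, and since a polynomial of type $(\mathsf{a};b)$ is by definition a sum of monomials of type $(\mathsf{a};b)$, it suffices to prove that $\sum_{\lambda_{i,j}=\pm 1} m = 0$ for every \emph{monomial} $m$ of type $(\mathsf{a};b)$ with all $a_i$ odd and $b<n$. The basic input is the one-variable identity: $\sum_{\lambda=\pm 1}\lambda^e$ equals $2$ when $e$ is even and $0$ when $e$ is odd. Consequently, for a monomial $m=\prod_{i,j}\lambda_{i,j}^{e_{i,j}}$ the sum factors as $\prod_{i,j}\big(\sum_{\lambda_{i,j}=\pm1}\lambda_{i,j}^{e_{i,j}}\big)$ and therefore vanishes as soon as at least one total exponent $e_{i,j}$ is odd; it is a nonzero power of $2$ exactly when every $e_{i,j}$ is even. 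Thus the entire statement reduces to showing that such an $m$ must carry some odd total exponent.

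First I would write $m=\prod_{i,j}\lambda_{i,j}^{a_{i,j}}\,h$ as in the definition of type, where $\sum_j a_{i,j}=a_i$ for each $i$ and $h$ is a monomial of length $\le b$. Let $h_{i,j}$ denote the exponent of $\lambda_{i,j}$ in $h$, so the total exponent of $\lambda_{i,j}$ in $m$ is $a_{i,j}+h_{i,j}$. Suppose, for contradiction, that all these total exponents are even, i.e. $h_{i,j}\equiv a_{i,j}\pmod 2$ for every pair $(i,j)$.

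The contradiction then comes from a row-by-row count. Fixing $i$ and summing the congruence over $j$ gives $\sum_j h_{i,j}\equiv\sum_j a_{i,j}=a_i\equiv 1\pmod 2$, since $a_i$ is odd. Hence for each $i\in\{1,\dots,n\}$ the quantity $\sum_j h_{i,j}$ is odd, so in particular at least one $h_{i,j}$ is odd, i.e. the canonical form of $h$ contains at least one generator $\lambda_{i,j}$ from the $i$-th row. As the $n$ rows use disjoint families of generators, the canonical form of $h$ involves at least $n$ distinct generators, forcing $\mathrm{length}(h)\ge n$. This contradicts $\mathrm{length}(h)\le b<n$. Therefore some total exponent $a_{i,j}+h_{i,j}$ is odd, whence $\sum_{\lambda_{i,j}=\pm1}m=0$, and summing over the monomials of $f$ finishes the argument.

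I do not expect a serious obstacle here, as the argument is essentially bookkeeping; the only point that needs care is keeping the two roles of the exponents straight — the pulled-out exponents $a_{i,j}$, which encode the type and are constrained only through their row-sums $a_i$, versus the exponents $h_{i,j}$ internal to the low-length factor $h$ — and recognizing that the relation $\lambda_{i,j}^2=1$ enters only implicitly, through the fact that the sign-sum detects nothing beyond the parities of the total exponents.
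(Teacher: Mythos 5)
Your proof is correct, and it is exactly the parity argument the paper has in mind when it dismisses this lemma as "easy to check" without giving a proof: reduce to monomials, factor the sign-sum, and show that forcing every total exponent to be even would require the low-length factor to contain a generator from each of the $n$ rows, contradicting $b<n$. The bookkeeping (distinguishing the pulled-out exponents $a_{i,j}$ from the exponents internal to $h$, and noting that only parities matter because of $\lambda_{i,j}^2=1$) is handled correctly.
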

\noindent The notation $\sum_{\lambda_{i,j}=\pm 1} f$ in Lemma \ref{ev} denotes the sum of evaluations of $f\in \mathsf{R}$ with all possible choice of $\lambda_{i,j}= 1 \, \text{or} -1.$

\

\subsection{Universal polynomial} 
We review here the definition of universal polynomial $\mathsf{P}$ from \cite{LP}. The universal polynomial play the essential role in calculating higher genus invariants from genus $0$ invariants, see  \cite{Elliptic,Book,LP} for more explanations.
 
 Let $t_0,t_1,t_2, \ldots$ be formal variables. The series
 $$T(c)=t_0+t_1 c+t_2 c^2+\ldots$$   in the
additional variable $c$ plays a basic role. The variable $c$
will later be  replaced by the first Chern class $\psi_i$ of
 a cotangent line  over $\overline{M}_{g,n}$, 
 $$T(\psi_i)= t_0 + t_1\psi_i+ t_2\psi_i^2 +\ldots\, ,$$
 with the index $i$
 depending on the position of the series $T$ in the correlator.

Let $2g-2+n>0$.
For $a_i\in \mathbb{Z}_{\geq 0}$ and  $\gamma \in H^*(\overline{M}_{g,n})$, define the correlator 
\begin{multline*}
    \lann \psi^{a_1},\ldots,\psi^{a_n}\, | \, \gamma\,  \rann_{g,n}= 
    \sum_{k\geq 0} \frac{1}{k!}\int_{\overline{M}_{g,n+k}}
    \gamma \, \psi_1^{a_1}\cdots 
     \psi_n^{a_n}  \prod_{i=1}^k T(\psi_{n+i})\, . 
\end{multline*}

We consider 
$\CC(t_1)[t_2,t_3,...]$
as $\ZZ$-graded ring over $\CC(t_1)$ with 
$$\text{deg}(t_i)=i-1\ \ \text{for $i\geq 2$ .}$$
Define a subring of homogeneous elements by
$$\CC\left[\frac{1}{1-t_1}\right][t_2,t_3,\ldots]_{\text{Hom}} \subset 
\CC(t_1)[t_2,t_3,...]\, .
$$
We easily see 
$$\lann \psi^{a_1},\ldots,\psi^{a_n}\, | \, \gamma \, \rann_{g,n}\, |_{t_0=0}\ \in\
\CC\left[\frac{1}{1-t_1}\right][t_2,t_3,\ldots]_{\text{Hom}}\, .$$
Using the leading terms (of lowest degree in $\frac{1}{(1-t_1)}$), we obtain the
following result.

\begin{Lemma}\label{basis}
The set of genus 0 correlators
 $$
 \Big\{ \, \lann 1,\ldots,1\rann_{0,n}\, |_{t_0=0} \, \Big\}_{n\geq  4} $$ 
freely generate the ring
 $\CC(t_1)[t_2,t_3,...]$ over $\CC(t_1)$.
\end{Lemma}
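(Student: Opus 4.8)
Write $C_n := \lann 1,\ldots,1\rann_{0,n}\,|_{t_0=0}$. The plan is to compute, for each $n\ge 4$, the leading term of $C_n$ (the part of lowest order in $\tfrac{1}{1-t_1}$), to show it equals a nonzero $\CC(t_1)$-multiple of the single variable $t_{n-2}$, and to check that every remaining monomial of $C_n$ involves only $t_2,\ldots,t_{n-3}$. This triangular shape then lets me recover the $t_j$ from the $C_n$ and conclude free generation. First I would record that $C_n$ is homogeneous of degree $n-3$: with $t_0=0$ each factor $T(\psi_{n+i})=\sum_{j\ge 1}t_j\psi_{n+i}^{\,j}$ contributes some $t_{j_i}\psi^{\,j_i}$ with $j_i\ge 1$, and a nonzero integral over $\overline{M}_{0,n+k}$ forces $\sum_i j_i = n+k-3$, so the $t$-monomial $\prod_i t_{j_i}$ has degree $\sum_i(j_i-1)=n-3$.

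\textbf{Resumming the dilaton insertions.} Fix a target monomial $\prod_{j\ge 2}t_j^{m_j}$ and set $\ell=\sum_{j\ge 2}m_j$. Such a monomial arises in $C_n$ by letting $\ell$ of the auxiliary markings carry the higher terms $t_j\psi^{\,j}$ $(j\ge 2)$ and the remaining $r=k-\ell$ markings carry the dilaton term $t_1\psi$. Applying the genus-zero dilaton equation $r$ times to strip off the dilaton markings produces the factor $\tfrac{(n+\ell-3+r)!}{(n+\ell-3)!}$, leaving a fixed integral $\int_{\overline{M}_{0,n+\ell}}\prod\psi^{\,a}$ over the higher markings. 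Absorbing the $\tfrac{1}{k!}$ together with the choice of markings into a multinomial coefficient, the sum over $r$ collapses via $\sum_{r\ge 0}\binom{s+r}{r}t_1^{\,r}=(1-t_1)^{-(s+1)}$ with $s=n+\ell-3$. Hence the coefficient of each monomial is a pure power $(1-t_1)^{-(n+\ell-2)}$ times a rational number.

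\textbf{Identifying the leading term.} Within $C_n$ the exponent $n+\ell-2$ of $\tfrac{1}{1-t_1}$ is smallest exactly when $\ell$ is smallest. Since each higher factor has degree $\ge 1$ and the total degree is $n-3$, the value $\ell=1$ is attained only by the monomial $t_{n-2}$, whose accompanying integral is $\int_{\overline{M}_{0,n+1}}\psi^{\,n-2}=1$. Thus the leading term is $\tfrac{t_{n-2}}{(1-t_1)^{n-1}}$, while every other monomial of $C_n$ (those with $\ell\ge 2$) is a product of at least two factors of degree $\ge 1$ summing to $n-3$ and so involves only $t_2,\ldots,t_{n-3}$. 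Consequently
\[
C_n=\frac{t_{n-2}}{(1-t_1)^{n-1}}+P_n(t_2,\ldots,t_{n-3}),\qquad P_n\in\CC(t_1)[t_2,\ldots,t_{n-3}].
\]
Now I would argue by induction on $n$. The base case $C_4=\tfrac{t_2}{(1-t_1)^3}$ gives $t_2\in\CC(t_1)[C_4]$; assuming $t_2,\ldots,t_{n-3}$ already lie in $\CC(t_1)[C_4,\ldots,C_{n-1}]$, the displayed relation yields $t_{n-2}=(1-t_1)^{n-1}\big(C_n-P_n\big)\in\CC(t_1)[C_4,\ldots,C_n]$. Hence $\CC(t_1)[C_4,C_5,\ldots]=\CC(t_1)[t_2,t_3,\ldots]$. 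These same triangular relations define a two-sided inverse to the substitution $x_n\mapsto C_n$ on $\CC(t_1)[x_4,x_5,\ldots]$, so that substitution is an isomorphism, i.e. the $C_n$ are algebraically independent over $\CC(t_1)$; with generation this is the asserted free generation.

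\textbf{Main obstacle.} The only nonformal part is the dilaton resummation above: carrying it out to pin down the exact power of $(1-t_1)$ and to verify that the minimal monomial $t_{n-2}$ occurs with the nonzero unit coefficient $(1-t_1)^{-(n-1)}$ (via $\int_{\overline{M}_{0,n+1}}\psi^{\,n-2}=1$), while all lower-order monomials are supported on $t_2,\ldots,t_{n-3}$. Once the leading terms are in hand, the rest is the routine triangular inversion of the last step.
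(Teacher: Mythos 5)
Your proof is correct and takes exactly the approach the paper intends: the paper's entire justification is the sentence ``Using the leading terms (of lowest degree in $\frac{1}{1-t_1}$), we obtain the following result,'' and your dilaton resummation establishing $C_n=\frac{t_{n-2}}{(1-t_1)^{n-1}}+P_n(t_2,\ldots,t_{n-3})$ followed by the triangular inversion is precisely that argument carried out in detail. The leading coefficient $1/(1-t_1)^{n-1}$ and the support of the lower-order terms both check out, so there is nothing to correct.
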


By  Lemma \ref{basis}, we can find a unique representation of $\lann \psi^{a_1},\ldots,\psi^{a_n}\rann_{g,n}|_{t_0=0}$
in the  variables
\begin{equation}\label{k3k3}
\Big\{\, \lann 1,\ldots,1\rann_{0,n}|_{t_0=0}\, \Big\}_{n\geq 3}\, .
\end{equation}
The $n=3$ correlator is included in the set \eqref{k3k3} to
capture the variable $t_1$.
For example, in $g=1$,
\begin{eqnarray*}
    \lann 1,1\rann_{1,2}|_{t_0=0}&=&\frac{1}{24}
    \left(\frac{\lann 1,1,1,1,1\rann_{0,5}|_{t_0=0}}{\lan 1,1,1\rann_{0,3}|_{t_0=0}}-\frac{\lann 1,1,1,1\rann^2_{0,4}|_{t_0=0}}{\lann 1,1,1\rann^2_{0,3}|_{t_0=0}}\right)\, ,\\
    \lann 1\rann_{1,1}|_{t_0=0}&=&\frac{1}{24}\frac{\lann 1,1,1,1\rann_{0,4}|_{t_0=0}}{\lann 1,1,1\rann_{0,3}|_{t_0=0}}
    \end{eqnarray*}
A more complicated example in $g=2$ is    
\begin{eqnarray*}
\lann \rann_{2,0}|_{t_0=0}&=& \ \ \frac{1}{1152}\frac{\lann 1,1,1,1,1,1\rann_{0,6}|_{t_0=0}}{\lann 1,1,1\rann_{0,3}|_{t_0=0}^2}\\
    & & -\frac{7}{1920}\frac{\lann 1,1,1,1,1\rann_{0,5}|_{t_0=0}\lann 1,1,1,1\rann_{0,4}|_{t_0=0}}{\lann 1,1,1\rann_{0,3}|_{t_0=0}^3}\\& &+\frac{1}{360}\frac{\lann 1,1,1,1\rann_{0,4}|_{t_0=0}^3}{\lann 1,1,1\rann_{0,3}
    |_{t_0=0}^4}\, .
\end{eqnarray*}

\begin{Def} 
For $\gamma \in H^*(\overline{M}_{g,k})$, let $$\pP^{a_1,\ldots,a_n,\gamma}_{g,n}(s_0,s_1,s_2,...)\in \QQ(s_0, s_1,..)$$ be 
the unique rational function satisfying the condition
$$\lann \psi^{a_1},\ldots,\psi^{a_n}\, |\, \gamma\, \rann_{g,n}|_{t_0=0}
=\pP^{a_1,a_2,...,a_n,\gamma}_{g,n}|_{s_i=\lann 1,\ldots,1\rann_{0,i+3}|_{t_0=0}}\, . $$
\end{Def}

We will use following notation.
\begin{multline*}
\ppl
\psi^{k_1}_1, \ldots,\psi^{k_n}_n \, \Big|\, \mathsf{H}_{h}^{p_i}     \ppr_{h,n}^{0+,p_i} 
:=\\
\pP^{k_1,\ldots,k_n,\mathsf{H}_{h}^{p_i}  }_{h,1}\big(\lann 1,1,1\rann_{0,3}^{0+,p_i},\lann 1,1,1,1\rann_{0,4}^{0+,p_i},\ldots \big).
\,
\end{multline*}

\

\section{Higher genus series on $\PP[n]$}
We review here the now standard method used by Givental to express genus $g$ descendent correlators in terms of genus $0$ data, see \cite{SS,KL,Book,LP} for more explanations.

\begin{Def}  Let us define equivariant genus $g$ generating function $F^\mathsf{T}_{g,m}(\mathsf{q}) \in \CC[[\mathsf{q}]](\mathsf{\lambda})$ of $\PP[n]$ as following;

 \begin{align*}
  &F^{\mathsf{T}}_{g,m}[\tau_{k_1}\mathsf{c}_1,\dots,\tau_{k_m}\mathsf{c}_m](\mathsf{q}) := \\
  &\sum_{d_1,d_2,..,d_n \ge 0} q^{d_1}_1 q^{d_2}_2...q^{d_n}_n \int _{ [Q_{g,0}(\PP[n], \mathsf{d})]^{\vir}} \prod^{m}_{i=1}\pi^*(\psi_{i})^{k_i} \text{ev}_{i}^*(\ot_{j=1}^n H_j^{c_{j,i}})
  \in \CC[[\mathsf{q}]](\mathsf{\lambda}). 
 \end{align*}


 \end{Def}

We apply the localization strategy introduced first by Givental for Gromov-Witten theory to the quasimap invariats of $\PP[n]$. We write the localization formula as
$$F^{\mathsf{T}}_{g,m}[\tau_{k_1}\mathsf{c}_1,\dots,\tau_{k_m}\mathsf{c}_m]|_{\lambda_i=1,\overline{\lambda}_i=-1}=\sum_{\lambda_{i,j}=\pm1} \sum_{\Gamma\in\mathsf{G}^{\text{ord}}_{g,0}} \text{Cont}_{\Gamma}\, $$

\noindent where $\text{Cont}_{\Gamma}$ is the contribution to $F^g$ of the $T$-fixed loci associated $\Gamma$. We have the following formula for $\text{Cont}_\Gamma$.
\begin{align*}
 \text{Cont}_\Gamma = \frac{1}{\rm{Aut}(\Gamma)} \sum_{\mathsf{A},\mathsf{B} \in \ZZ_{\ge 1}^{F(\Gamma)}\ti\ZZ_{\ge 1}^{L(\Gamma)}}\prod_{v\in V(\Gamma)} \prod_{e\in E(\Gamma)} \prod_{l\in L(\Gamma)} \text{Cont}_{\Gamma}^{\mathsf{A},\mathsf{B}}(v) \text{Cont}_{\Gamma}^\mathsf{A}(e)\text{Cont}_{\Gamma}^{\mathsf{B}}(l). 
\end{align*}

\noindent We need explanations for the $\text{Cont}_{\Gamma}^{\mathsf{A},\mathsf{B}}(v)$, $\text{Cont}_{\Gamma}^\mathsf{A}(e)$ and $\text{Cont}_{\Gamma}^{\mathsf{B}}(l)$. Let $v \in V(\Gamma)$ be a vertex of genus $\mathsf{g}(v)$.

\begin{align*}
 \text{Cont}_{\Gamma}^{\mathsf{A},\mathsf{B}}(v) = \mathsf{P}[\psi^{a_1-1},\psi^{a_2-1},...,\psi^{a_\alpha-1},\psi^{b_1-1+k_{j_1}},\dots,\psi^{b_\beta+k_{j_\beta}}|\mathsf{H}^{v}_{\mathsf{g}(v)}]^{0+,v}_{\mathsf{g}(v),\alpha+\beta}
\end{align*}

\noindent where, 
\begin{itemize}
\item
$\mathsf{H}^{v}_{\mathsf{g}(v)}=\prod^n_{k=1} \frac{\prod^{\mathsf{g}(v)}_{j=1}\left(\lambda_{k,v}-(-{\lambda}_{k,v})-c_j\right)}{\lambda_{k,v}-(-{\lambda}_{k,v})}$
\item
$(a_1,a_2,\dots,a_\beta)$ are components of $\mathsf{A} \in \ZZ^{F(\Gamma)}_{\ge 1}$ associated to flags of $v$ and $(b_1,b_2,\dots,b_\beta)$ are components of $\mathsf{B} \in \ZZ^{L(\Gamma)}_{\ge 1}$ associated to legs attached to $v$.
\item
$\lambda_{k,v}$ is the weight of $k$-th $\CC^*$ at $\nu(v)$.
\item
$\{c_j | 1 \le j \le \mathsf{g}(v) \}$ are chern roots of  Hodge bundle on $\overline{M}_{\mathsf{g}(v),k}$.
\end{itemize}

\noindent Let $e \in E(\Gamma)$ be an edge between $v_i$ and $v_j$.

\begin{align*}
 \text{Cont}_{\Gamma}^\mathsf{A}(e) = \left[e^{-\frac{\overline{\mathds{U}}^{\PP[n]}_i}{x}-\frac{\overline{\mathds{U}}^{\PP[n]}_j}{y}}e_i\overline{\mathds{V}}^{\PP[n]}_{ij}(x,y)e_j\right]_{x^{a_i-1}y^{a_j-1}}
\end{align*}
where $(a_i,a_j)$ are component of $\mathsf{A} \in \ZZ^{F(\Gamma)}_{\ge 1}$ associated to flags $(v_i,e)$ and $(v_j,e)$. The notation $[\ldots]_{x^{a_1-1}y^{a_2-1}}$
above denotes the coefficient of
 $x^{a_1-1}y^{a_2-1}$ in the series expansion
 of the argument. The bar above $\mathds{U}_i$ and $\mathds{V}_{ij}$ means evaluation $\mathsf{t}=0$. 

\noindent Let $l\in L(\Gamma)$ be the $i$-th leg with insertion $\tau_{k_i}\mathsf{c}_i$.
$$\text{Cont}^{\mathsf{B}}_{\Gamma}(l)=\left[ e^{-\frac{\overline{\mathds{U}}^{\PP[n]}_{v_l}}{x}}\overline{\mathds{S}}^{\PP[n]}_{v_l}(\ot_j H_j^{c_{j,i}}) \right]_{x^{a}}, $$
where $a$ is component of $\mathsf{B}$ associated to leg $l$.
 
\begin{Rmk}
 By \eqref{Ordering}, $\text{Cont}^{\mathsf{A},\mathsf{B}}_{\Gamma}(v)$,$\text{Cont}^\mathsf{A}_{\Gamma}(e)$ and $\text{Cont}^\mathsf{B}_{\Gamma}(l)$ can be considered as elements in $\mathsf{R}[[\mathsf{q},x^{-1},y^{-1}]]$.
\end{Rmk}

\noindent Finally we obtain the following result.
\begin{Prop} For $\PP[n]$, We have
 \begin{align*}
 &F^{\mathsf{T}}_{g,m}[\tau_{k_1}\mathsf{c}_1,\dots,\tau_{k_m}\mathsf{c}_m]|_{\lambda_i=1,\overline{\lambda}_i=-1}  = \\ &\sum_{\lambda_{i,j}=\pm 1}\sum_{\Gamma\in\mathsf{G}^{\rm{ord}}_{g,0}} \frac{1}{\rm{ Aut}(\Gamma)} \sum_{\mathsf{A},\mathsf{B} \in \ZZ_{\ge 1}^{F(\Gamma)}\ti \ZZ_{\ge 1}^{L(\Gamma)}}\prod_{v} \prod_{e}\prod_{l} {\rm Cont}_{\Gamma}^{\mathsf{A},\mathsf{B}}(v) {\rm Cont}_{\Gamma}^\mathsf{A}(e){\rm Cont}_{\Gamma}^\mathsf{B}(l),
 \end{align*}
 where ${\rm Cont}_{\Gamma}^{\mathsf{A},\mathsf{B}}(v)$,${\rm Cont}_{\Gamma}^\mathsf{A}(e)$ and ${\rm Cont}_{\Gamma}^\mathsf{B} (l)$ are defined as above. 
\end{Prop}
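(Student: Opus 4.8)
The plan is to apply the Graber--Pandharipande virtual localization formula \cite{GP} to the $\mathsf{T}$-action on $Q_{g,0}(\PP[n],\mathsf{d})$ and then reorganize the resulting graph sum into the vertex/edge/leg contributions stated above, following the method of Givental \cite{SS,Book} as adapted to quasimap theory in \cite{KL,LP}. First I would recall that the $\mathsf{T}$-fixed loci are indexed by the decorated graphs $\Gamma\in\mathsf{G}^{\mathrm{ord}}_{g,0}$ (with the $m$ ordinary insertions carried as legs): each vertex $v$ records a connected genus $\mathsf{g}(v)$ subcurve contracted to the fixed point $p_{\nu(v)}$, where the base points and the degree concentrated there sit; each edge records a rational component dominating a coordinate line joining two fixed points; and the factor $\frac{1}{\mathrm{Aut}(\Gamma)}$ accounts for graph automorphisms. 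Summing over the residual specializations $\lambda_{i,j}=\pm1$ produces the outer sum, and the bar on $\mathds{U}$, $\mathds{V}$, $\mathds{S}$ reflects the evaluation $\mathsf{t}=0$ appropriate to $F^{\mathsf{T}}_{g,m}$.

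Next I would show that the virtual normal bundle and the integrand factorize along $\Gamma$. The obstruction theory splits into moving parts supported on the vertex curves, on the edges, and at the nodes; the Euler class of $N^{\mathrm{vir}}$ becomes a product over these pieces, while the descendent and evaluation insertions distribute onto the legs. On a vertex of genus $\mathsf{g}(v)$ the localized contribution is an integral over $\overline{M}_{\mathsf{g}(v),\alpha+\beta}$ of a product of psi-classes against the Hodge-class factor $\mathsf{H}^{v}_{\mathsf{g}(v)}=\prod_k\frac{\prod_j(\lambda_{k,v}-(-\lambda_{k,v})-c_j)}{\lambda_{k,v}-(-\lambda_{k,v})}$, which is precisely the localization of the Hodge-bundle part of the obstruction theory at $p_{\nu(v)}$. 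Invoking Lemma \ref{basis} and the definition of the universal polynomial $\mathsf{P}$, this genus $\mathsf{g}(v)$ integral is rewritten as $\mathsf{P}$ evaluated at the genus $0$ local correlators $\lann 1,1,1\rann^{0+,p_i}_{0,3},\lann 1,1,1,1\rann^{0+,p_i}_{0,4},\ldots$, giving exactly $\mathrm{Cont}^{\mathsf{A},\mathsf{B}}_\Gamma(v)$.

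Then I would identify the edge and leg factors. At each node the smoothing parameter contributes the dilaton-shift exponentials $e^{-\overline{\mathds{U}}_i/x}$, where $\mathds{U}_i$ is the two-pointed local series defined above; the two-point function attached to an edge joining $p_i$ and $p_j$ is the localized series $\overline{\mathds{V}}_{ij}(x,y)$, so after extracting the relevant coefficient the edge contributes $[e^{-\overline{\mathds{U}}_i/x-\overline{\mathds{U}}_j/y}\,e_i\overline{\mathds{V}}_{ij}(x,y)e_j]_{x^{a_i-1}y^{a_j-1}}$, matching $\mathrm{Cont}^{\mathsf{A}}_\Gamma(e)$. Each ordinary marking becomes a leg whose contribution is computed by the $\mathds{S}$-operator via Proposition \ref{S}, yielding $[e^{-\overline{\mathds{U}}_{v_l}/x}\overline{\mathds{S}}_{v_l}(\otimes_j H_j^{c_{j,i}})]_{x^{a}}=\mathrm{Cont}^{\mathsf{B}}_\Gamma(l)$, with the insertion $\otimes_j H_j^{c_{j,i}}$ and the descendent order $k_i$ entering through the argument of $\overline{\mathds{S}}$ and the power of $x$. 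Working with the $0+$ weighted markings of \cite{BigI} is what makes these local series well defined and lets the node, edge, and leg data be packaged cleanly into $\mathds{U}$, $\mathds{V}$, $\mathds{S}$; finally the remark following the statement certifies that every factor lies in $\mathsf{R}[[\mathsf{q},x^{-1},y^{-1}]]$.

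The main obstacle is the bookkeeping at the nodes: one must verify that the psi-class at a node on the vertex side combines with the edge expansion in $x,y$ and with the dilaton-shift exponential so that the total matches the stated coefficient extraction, with no residual smoothing factor left over or double-counted. Concretely this is the compatibility of the vertex psi-insertions, now organized through $\mathsf{P}$, with the $x$- and $y$-expansions of $\overline{\mathds{V}}_{ij}$ and $\overline{\mathds{S}}$, and it is where the precise normalization of $\mathds{U}$ and the conventions for the $0+$ markings must be reconciled carefully.
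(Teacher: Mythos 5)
Your proposal is correct and follows essentially the same route the paper takes: the paper offers no detailed proof of this Proposition, simply asserting it as the standard Givental-style localization formula (citing \cite{SS,KL,Book,LP} and noting the fixed-locus formulas are ``standard''), and your outline --- virtual localization of \cite{GP}, factorization of $N^{\mathrm{vir}}$ into vertex/edge/node pieces, rewriting of vertex Hodge integrals via the universal polynomial $\mathsf{P}$ in genus $0$ local correlators, and identification of edge and leg factors with $\overline{\mathds{V}}_{ij}$ and $\overline{\mathds{S}}$ --- is precisely the argument those references supply. Your closing caveat about the node bookkeeping (matching the dilaton-shift exponentials and the $x,y$-coefficient extraction) correctly identifies the only delicate normalization point, which the paper likewise delegates to the cited sources.
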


\

\section{Proof of main theorem}




\subsection{Overview}
Using \eqref{I} and Proposition \ref{S}, we can explicitly calculate the $\mathds{S}$-operator of $\PP[n]$.
\begin{Prop}\label{SS} For $\{i_1, i_2,..., i_n\}=\{0,1\}$,
  \begin{align*}\mathds{S}^{\PP[n]}(H_1^{i_1}, H_2^{i_2},....,H_n^{i_n})=(z\frac{d}{dt_1})^{i_1}(z\frac{d}{dt_2})^{i_2}...(z\frac{d}{dt_n})^{i_n} \mathds{I}^{\PP[n]}.
  \end{align*}
\end{Prop}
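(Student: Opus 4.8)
The plan is to deduce the formula from the characterisation of the $\mathds{S}$-operator recalled in Proposition \ref{S}. Write $\gamma = H_1^{i_1}\cdots H_n^{i_n}$ (each $i_j\in\{0,1\}$) and set
$$D_{\mathsf{i}} := \Big(z\tfrac{d}{dt_1}\Big)^{i_1}\cdots\Big(z\tfrac{d}{dt_n}\Big)^{i_n}.$$
By Proposition \ref{S}, $\mathds{S}^{\PP[n]}(\gamma)$ is the \emph{unique} expression of the form $\sum_i a_i(z,\mathsf{q})\,\partial_{\phi_i}\mathds{I}^{\PP[n]}$ with $a_i\in\QQ(\lambda)[z][[\mathsf{q}]]$ whose $z\to\infty$ expansion equals $\gamma + O(1/z)$. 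Hence it is enough to verify two things about $D_{\mathsf{i}}\mathds{I}^{\PP[n]}$: (a) that it lies in the $\QQ(\lambda)[z][[\mathsf{q}]]$-span of $\{\partial_{\phi_i}\mathds{I}^{\PP[n]}\}$, and (b) that its leading term is $\gamma$. Uniqueness then forces $D_{\mathsf{i}}\mathds{I}^{\PP[n]}=\mathds{S}^{\PP[n]}(\gamma)$, which is the assertion.

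For (b) I would exploit that the explicit form \eqref{I} factorises as $\mathds{I}^{\PP[n]} = \prod_{j=1}^n \mathds{I}_j$, where $\mathds{I}_j$ is the single-$\PP^1$ factor depending only on $t_j,q_j$ (and $z,H_j,\lambda_j,\overline\lambda_j$). Expanding in $1/z$, the $d_j=0$ term of $\mathds{I}_j$ contributes $e^{t_j H_j/z}=1+O(1/z)$ while each $d_j\ge 1$ term is $O(1/z^2)$, so $\mathds{I}_j = 1 + O(1/z)$. Differentiating, $z\frac{d}{dt_j}\mathds{I}_j = \sum_{d_j}(H_j + d_j z)e^{t_j(H_j+d_jz)/z}\frac{q_j^{d_j}}{\prod_{k=1}^{d_j}(\cdots)}$; here the $d_j=0$ term gives $H_j + O(1/z)$ and every $d_j\ge 1$ term is again $O(1/z)$, so $z\frac{d}{dt_j}\mathds{I}_j = H_j + O(1/z)$. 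Since $\mathds{I}_j$ depends only on $t_j$ and each $i_j\le 1$, Leibniz' rule and separation of variables give $D_{\mathsf{i}}\mathds{I}^{\PP[n]} = \prod_{j}\big((z\frac{d}{dt_j})^{i_j}\mathds{I}_j\big)$, whose product of leading terms is $\prod_j H_j^{i_j} = \gamma$. This settles (b).

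For (a) I would argue that the module $\mathcal{M}:=\sum_i\QQ(\lambda)[z][[\mathsf{q}]]\,\partial_{\phi_i}\mathds{I}^{\PP[n]}$ contains $\mathds{I}^{\PP[n]}=\mathds{S}^{\PP[n]}(1)$ and is stable under each operator $z\frac{d}{dt_j}$. This stability is precisely the quantum differential equation satisfied by the $\mathds{S}$-operator, $z\frac{d}{dt_j}\mathds{S}^{\PP[n]}(\eta)=\mathds{S}^{\PP[n]}(H_j\ast\eta)$, where $\ast$ is the ($t$-dependent, equivariant) quantum product; since $H_j\ast\eta\in H^*_\mathsf{T}(\PP[n])\otimes\QQ(\lambda)$, the right-hand side again lies in $\mathcal{M}$. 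Applying this repeatedly, starting from $\mathds{I}^{\PP[n]}=\mathds{S}^{\PP[n]}(1)$, shows $D_{\mathsf{i}}\mathds{I}^{\PP[n]}\in\mathcal{M}$, which gives (a).

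The main obstacle is to control the quantum corrections, so that the final class is the \emph{classical} monomial $\gamma$ rather than a quantum product: read naively, the quantum differential equation produces $\mathds{S}^{\PP[n]}(H_1^{\ast i_1}\ast\cdots\ast H_n^{\ast i_n})$. The resolution is that because each $i_j\le 1$ one only ever multiplies pairwise distinct divisors pulled back from different $\PP^1$ factors, and the quantum cohomology of a product is the tensor product of the quantum cohomologies; hence $H_a\ast H_b = H_a H_b$ for $a\ne b$ with no quantum correction, and the iterated quantum product collapses to the classical monomial $\gamma$. Equivalently, this is exactly the statement that $\mathds{S}^{\PP[n]}$ is multiplicative over the product and that no self-product $(z\frac{d}{dt_j})^2$ ever occurs; this collapse is already encoded in the leading-term computation of the second paragraph, which is why I prefer to finish through the uniqueness in Proposition \ref{S} rather than tracking the quantum product term by term.
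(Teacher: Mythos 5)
Your proposal is correct and follows essentially the route the paper intends: the paper states Proposition \ref{SS} with no written proof, pointing only to the explicit formula \eqref{I} and the uniqueness characterisation in Proposition \ref{S}, and your argument (the $1/z$-expansion $\gamma+O(1/z)$ via the product structure of $\mathds{I}^{\PP[n]}$, membership of $D_{\mathsf{i}}\mathds{I}^{\PP[n]}$ in the $\QQ(\lambda)[z][[\mathsf{q}]]$-span of the $\partial_{\phi_i}\mathds{I}^{\PP[n]}$, then uniqueness) is precisely the derivation being invoked. The care you take with step (a) via the quantum differential equation, and your observation that uniqueness lets you bypass identifying the iterated quantum product, fill in details the paper leaves implicit rather than departing from its approach.
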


\noindent If we restrict $\mathds{S}$-operator to the fixed points, it admits Birkhoff factorizations. See \cite{CKg0} for more explanations.  
\begin{Def} For fixed point $p \in (\PP[n])^T$, let us define $\mathds{R}_{p,k}^{i_1,i_2,..,i_n}$ by following;\\
 \begin{align*}
 &\mathds{S}^{\PP[n]}(H_1^{i_1} H_2^{i_2} ... H_n^{i_n})|_{p}= \\
 &e^{\frac{\mathds{U}^{\PP[n]}_p}{z}}\lambda_{1,p}^{\delta_{1i_1}}\lambda_{2,p}^{\delta_{1i_2}}...\lambda_{n,p}^{\delta_{1i_n}}(\sum_{k} \mathds{R}_{p,k}^{i_1,i_2,..,i_n}z^k) \in \mathsf{R}[[z,z^{-1},\mathsf{q},\mathsf{t}]](\mathsf{\lambda}).
 \end{align*}
\end{Def}

\noindent Here $\lambda_{k,p}$ is the weight of $k$-th $\CC^*$ at $p$. We consider $\lambda_{k,p}$ as an element of $\mathsf{R}$ by identifying $\lambda_{k,p}$ with $\lambda_{k,1}$.

\begin{Prop}\label{type}
 $\mathds{R}_{p,k}^{i_1,i_2,..,i_n}$ has type $(0,k)$.
\end{Prop}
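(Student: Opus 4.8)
The plan is to exploit the product structure of the infinitesimal $I$-function and reduce everything to a one-factor computation on $\PP^1$. The function \eqref{I} factors as $\mathds{I}^{\PP[n]}=\prod_{i=1}^n\mathds{I}^{(i)}$, where $\mathds{I}^{(i)}$ is the $\PP^1$-$I$-function of the $i$-th factor in the variables $q_i,t_i,\lambda_i,\overline{\lambda}_i$. Since $z\frac{d}{dt_i}$ acts only on $\mathds{I}^{(i)}$, Proposition \ref{SS} gives, for a $\T$-fixed point $p=(p^{(1)},\dots,p^{(n)})$,
\[
\mathds{S}^{\PP[n]}(H_1^{i_1}\cdots H_n^{i_n})\big|_p=\prod_{i=1}^n\Big[(z\tfrac{d}{dt_i})^{i_i}\mathds{I}^{(i)}\Big]\Big|_{p^{(i)}}.
\]
Each factor has its own Birkhoff factorization $e^{U^{(i)}/z}\lambda_{i,p}^{\delta_{1 i_i}}\big(\sum_{k}R^{(i)}_k z^k\big)$, and multiplying them shows $\mathds{U}^{\PP[n]}_p=\sum_i U^{(i)}$ together with $\mathds{R}_{p,k}^{i_1,\dots,i_n}=\sum_{k_1+\cdots+k_n=k}\prod_{i=1}^n R^{(i)}_{k_i}$. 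By Lemma \ref{multiplicative} and the definition of type for polynomials, it then suffices to prove that every one-factor coefficient $R^{(i)}_{k_i}$ has type $(0;k_i)$ in its single variable $\lambda_{i,p}=\lambda_{i,1}$.

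For the one-factor analysis I would fix a factor, write $D=Q\frac{d}{dQ}$ with $Q=qe^{t}$, and let $w=2\lambda_p$ be the tangent weight at $p$ (so that $w^2=4\lambda_p^2=4$ is a scalar). Restricting gives $\mathds{I}^{(i)}|_p=e^{t\lambda_p/z}\Phi$ with $\Phi=\sum_{d\ge0}\frac{Q^d}{z^d d!\prod_{k=1}^d(w+kz)}$, and a direct check shows $\Phi$ solves the Picard--Fuchs equation $(z^2D^2+wzD-Q)\Phi=0$. Substituting the Birkhoff ansatz $\Phi=e^{V/z}\sum_{k\ge0}R_kz^k$ and collecting powers of $z$ yields at order $z^0$ the eikonal equation $(DV)^2+wDV=Q$, whose branch regular at $Q=0$ is $DV=\lambda_p(\sqrt{1+Q}-1)$ (one must take $\sqrt{w^2+4Q}=2\lambda_p\sqrt{1+Q}$ so that $DV|_{Q=0}=0$). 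At the next order one obtains the transport equation $(2DV+w)DR_k+D^2V\,R_k+D^2R_{k-1}=0$, which after the simplifications $2DV+w=2\lambda_p\sqrt{1+Q}$ and $D^2V=\lambda_pQ/(2\sqrt{1+Q})$ becomes
\[
DR_k=-\frac{Q}{4(1+Q)}\,R_k-\frac{\lambda_p}{2\sqrt{1+Q}}\,D^2R_{k-1},\qquad R_k|_{Q=0}=\delta_{k0}.
\]

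The length bound then follows by induction on $k$. The essential observation is that the homogeneous coefficient $Q/(4(1+Q))$ has length $0$: the two length-one quantities $D^2V$ and $2DV+w$ differ precisely by the factor $\lambda_p$, which cancels in their ratio because $\lambda_p^2=1$. Hence $R_0$ is a scalar power series in $Q$ of length $0$ (explicitly $R_0=(1+Q)^{-1/4}$), which is the base case. Since the single variable satisfies $\lambda_p^2=1$, every element of $\mathsf{R}$ in this factor has length $\le1$; the inhomogeneity $\frac{\lambda_p}{2\sqrt{1+Q}}D^2R_{k-1}$ therefore has length $\le1$, and solving a first-order linear ODE with length-$0$ homogeneous coefficient and length-$0$ initial data keeps $R_k$ of length $\le\min(k,1)\le k$. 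The insertion $i_i=1$ is handled identically: the identity $\lambda_p+DV=\lambda_p\sqrt{1+Q}$ lets one factor the prefactor $\lambda_{i,p}$ cleanly out of $(z\frac{d}{dt})\Phi$, leaving $\widetilde R_k=\sqrt{1+Q}\,R_k+\lambda_p\,DR_{k-1}$, again of length $\le\min(k,1)$. This gives type $(0;k_i)$ in each factor, and multiplicativity assembles the bounds into type $(0;k)$.

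The hard part is the base case, namely that $R_0$ is independent of the sign $\lambda_p$ (length $0$). Everything else is forced once this holds, because in a single factor lengths are capped at $1$, so the inductive step is routine and the product bound is immediate from Lemma \ref{multiplicative}. The $\lambda_p$-independence of $R_0$ is exactly the cancellation of $\lambda_p$ in $D^2V/(2DV+w)$, which rests on choosing the correct branch of $\sqrt{w^2+4Q}$ and on $w^2$ being a scalar; I expect verifying this branch choice and the cancellation to be the only genuinely delicate point.
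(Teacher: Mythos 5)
Your proof is correct and follows essentially the same route as the paper: factor $\mathds{I}^{\PP[n]}$ into $\PP^1$ pieces, reduce to a one-factor statement via Proposition \ref{SS} and Lemma \ref{multiplicative}, and establish that statement through the Picard--Fuchs equation of the $\PP^1$ $I$-function (which is exactly what the paper's unproved one-factor Lemma and its accompanying Remark point to). You prove a slightly weaker one-factor bound (length $\le\min(k,1)$, with the essential content being that $R_0=(1+Q)^{-1/4}$ is independent of $\lambda_p$) than the paper's exact parity statement $\mathds{Q}_{ik}\lambda_p^{\delta(k)}$, but this is sufficient for type $(0;k)$.
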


\begin{proof}
Applying Prop \ref{SS} to \eqref{I}, the proof follows from following lemma.
\end{proof}

\begin{Lemma} We have following assymtotic expansion of local $\mathds{S}$-operators of $\PP^1$ for  fixed point $p \in (\PP^1)^\T$.
 \begin{align*}
   &\mathds{S}^{\PP^1}(1)|_{p}=e^{\frac{\mathds{U}^{\PP^1}_p}{z}}(\sum_k \mathds{Q}_{0k} \lambda_p^{\delta(k)} z^k), \\
   &\mathds{S}^{\PP^1}(H)|_{p}=e^{\frac{\mathds{U}^{\PP^1}_p}{z}}\lambda_p(\sum_k \mathds{Q}_{1k} \lambda_p^{\delta(k)} z^k)
 \end{align*}
 where $\delta(k)=0$ if $k$ is even, $\delta(k)=1$ if $k$ is odd. Here $\mathds{Q}_{ik}$ are constant with respect to $\lambda_p$.
\end{Lemma}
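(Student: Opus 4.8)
The plan is to make the local $\mathds{S}$-operators completely explicit, restrict to a fixed point, and then read off the $\lambda_p$-dependence of the expansion coefficients from a single reflection symmetry; the monomial form in the statement will be forced at the end by the relation $\lambda_p^2=1$ in $\mathsf{R}$. By Proposition \ref{SS} applied with $n=1$ we have $\mathds{S}^{\PP^1}(1)=\mathds{I}^{\PP^1}$ and $\mathds{S}^{\PP^1}(H)=z\frac{d}{dt}\mathds{I}^{\PP^1}$, where from \eqref{I} and the specialization $\lambda+\overline{\lambda}=0$,
\[
\mathds{I}^{\PP^1}=\sum_{d\ge 0} e^{t(H+dz)/z}\frac{q^d}{\prod_{k=1}^d (H-\lambda+kz)(H+\lambda+kz)}.
\]
First I would restrict to a torus fixed point $p$, at which $H|_p=\lambda_p$ with $\lambda_p\in\{\lambda,-\lambda\}$; taking $\lambda_p=\lambda$, the restriction becomes
\[
\mathds{I}^{\PP^1}|_p=\sum_{d\ge 0} e^{t(\lambda+dz)/z}\frac{q^d}{\prod_{k=1}^d kz\,(2\lambda+kz)}.
\]

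The central observation is the behaviour under the reflection $\sigma:(z,\lambda_p)\mapsto(-z,-\lambda_p)$ (with $q,t$ fixed). Term by term, $e^{t(\lambda+dz)/z}$ is $\sigma$-invariant, while each of the $2d$ linear factors in the denominator changes sign, so the product is multiplied by $(-1)^{2d}=1$. Hence $\mathds{I}^{\PP^1}|_p$ is $\sigma$-invariant; it is precisely the specialization $\overline{\lambda}=-\lambda$ that pairs the two linear factors and produces this symmetry. Since the operator $z\frac{d}{dt}$ is $\sigma$-anti-invariant (because of the $z$), it follows that $\mathds{S}^{\PP^1}(H)|_p=z\frac{d}{dt}\mathds{I}^{\PP^1}|_p$ is $\sigma$-anti-invariant.

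Next I would transfer this symmetry to the Birkhoff factorization. By \cite{CKg0} we may write $\mathds{S}^{\PP^1}(1)|_p=e^{\mathds{U}^{\PP^1}_p/z}\sum_{k\ge0}\mathds{R}_{p,k}z^k$ with $\mathds{U}^{\PP^1}_p$ independent of $z$ and leading coefficient a unit. Such a factorization is unique: if $e^{U_1/z}R_1=e^{U_2/z}R_2$ with the $R_i$ power series in $z$ of nonzero constant term, then $e^{(U_1-U_2)/z}=R_2/R_1$ is an honest power series in $z$, which forces $U_1=U_2$ and $R_1=R_2$. Applying $\sigma$ to the factorization and invoking uniqueness gives $\mathds{U}^{\PP^1}_p(-\lambda_p)=-\mathds{U}^{\PP^1}_p(\lambda_p)$, so that $e^{\mathds{U}^{\PP^1}_p/z}$ is $\sigma$-invariant, together with the parity relation $\mathds{R}_{p,k}(-\lambda_p)=(-1)^k\mathds{R}_{p,k}(\lambda_p)$. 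Running the identical argument with the $\sigma$-anti-invariant series $\mathds{S}^{\PP^1}(H)|_p$ yields the opposite parity, which is exactly what produces the extra prefactor $\lambda_p$ recorded in the statement.

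Finally I would pass to the ring $\mathsf{R}$, where $\lambda_p=\lambda_{1,1}$ satisfies $\lambda_p^2=1$; in particular $\sqrt{\lambda_p^2+qe^t}$, which governs $\mathds{U}^{\PP^1}_p$ and the transport coefficients, becomes the $\lambda_p$-free series $\sqrt{1+qe^t}$. There each coefficient reduces to the form $A_k+B_k\lambda_p$ with $A_k,B_k$ free of $\lambda_p$, and the parity relation forces $B_k=0$ for $k$ even and $A_k=0$ for $k$ odd; that is, $\mathds{R}_{p,k}=\mathds{Q}_{0k}\lambda_p^{\delta(k)}$ with $\mathds{Q}_{0k}$ free of $\lambda_p$, and the $H$-computation gives the stated form with the prefactor $\lambda_p$. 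The step I expect to be the main obstacle is the middle one: justifying that the reflection symmetry descends through the (asymptotic) Birkhoff factorization via its uniqueness, and relatedly confirming that the coefficients $\mathds{R}_{p,k}$ genuinely lie in $\mathsf{R}[[q,t]]$ so that reduction modulo $\lambda_p^2=1$ is legitimate. Once those points are settled, the combination of parity with $\lambda_p^2=1$ gives the conclusion immediately.
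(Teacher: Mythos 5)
Your argument is correct, but there is nothing in the paper to compare it against: the paper states this lemma with no proof at all (the only follow-up is a remark deferring the computation of the $\mathds{Q}_{ik}$ to the reader via Picard--Fuchs), so your writeup supplies a justification the paper omits. The route you take --- restrict $\mathds{I}^{\PP[1]}$ to a fixed point using Proposition \ref{SS} and \eqref{I}, observe invariance (resp.\ anti-invariance) under $(z,\lambda_p)\mapsto(-z,-\lambda_p)$, push the symmetry through the asymptotic factorization by uniqueness, and then reduce modulo $\lambda_p^2=1$ --- is sound, and the uniqueness step is handled correctly (the coefficient of $z^{-1}$ in $e^{(U_1-U_2)/z}$ is $U_1-U_2$, which kills any discrepancy). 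The two points you flag as potential obstacles do close up: existence of the factorization with $e^{-\mathds{U}_p/z}\,\mathds{S}|_p$ regular in $z$ is exactly what the paper imports from the cited quasimap references, and Laurent polynomiality of the coefficients in $\lambda_p$ (needed for the reduction mod $\lambda_p^2=1$ to make sense) is immediate from expanding $\frac{1}{2\lambda_p+kz}=\sum_{j\ge0}\frac{(-k)^jz^j}{(2\lambda_p)^{j+1}}$ in \eqref{I}, since all denominators are powers of $2\lambda_p$ and $\lambda_p^{-1}=\lambda_p$ in $\mathsf{R}$. Two small remarks: the aside about $\sqrt{\lambda_p^2+qe^t}$ is unsubstantiated and unnecessary, so I would drop it; and an equivalent, slightly shorter packaging of your reflection argument is a homogeneity count --- $\mathds{I}^{\PP[1]}|_p$ is homogeneous of degree $0$ when $\deg z=\deg\lambda_p=1$ and $\deg q=2$, so $\mathds{R}_{p,k}$ is homogeneous of degree $-k$, forcing each monomial $\lambda_p^aq^b$ to have $a\equiv k\pmod 2$, which is your parity statement. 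Either way the conclusion $\mathds{R}_{p,k}=\mathds{Q}_{0k}\lambda_p^{\delta(k)}$ (and the extra prefactor $\lambda_p$ for the $H$-insertion) follows as you say.
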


\begin{Rmk} We can calculate $\mathds{Q}_{ik}$ in closed form using Picard-Fuchs equation of $I$-function. Since this is not needed in our paper, we leave the details to the readers.
\end{Rmk}

\begin{Prop}\label{TG} Let $\Gamma\in\mathsf{G}^{\rm{ord}}_{g,0}$ be a decorated graph.
\begin{enumerate}
\item $\text{Cont}_{\Gamma}^\mathsf{A}(v)$ has type $(\overline{\mathsf{g}(v)-1};3\mathsf{g}(v)-3+N_v(\mathsf{A})+N_v(\mathsf{B}))$, where $N_v(\mathsf{A})=\sum_{i=1}^{k}(2-a_i)$ and $N_v(\mathsf{B})=\sum_{i=1}^{\beta}(2-b_i-k_{j_i})$. Here $(a_1,\dots,a_\alpha)$ are components of $\mathsf{A}$ associated to $v$ and $(b_1,\dots,b_\beta)$ are components of $\mathsf{B}$ associated to $v$. The notation $\overline{a}$ for $a\in \NN$ means $(a,\dots,a)\in \NN^n$.

\item $\text{Cont}_\Gamma$ has type $(g-1+\sum_{i}c_{i,1},\dots,g-1+\sum_{i}c_{i,n};3g-3+m-\sum_{i=1}^m k_i)$.
\end{enumerate}
\end{Prop}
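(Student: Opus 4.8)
The plan is to establish both statements by reducing the type of $\text{Cont}_\Gamma$ to the types of its vertex, edge and leg factors through the graph-sum formula established above, and then reading off each factor type from the Birkhoff factorization of the local $\mathds{S}$-operator (Proposition \ref{type}) together with the genus-$0$ reconstruction packaged in the universal polynomial $\pP$. By Lemma \ref{multiplicative} the color vector $\mathsf{a}$ and the floating length $b$ are additive across the products defining $\text{Cont}_\Gamma$, so once the three factor types are known, summing them proves (2), while (1) is precisely the statement for a single vertex factor.

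For (1) I would first unfold the bracket, so that $\text{Cont}_\Gamma^{\mathsf{A},\mathsf{B}}(v)$ becomes $\pP$ with descendent exponents $a_i-1$ and $b_i-1+k_{j_i}$ and insertion class $\mathsf{H}^v_{\mathsf{g}(v)}$, evaluated at the local genus-$0$ correlators $s_i=\lann 1,\dots,1\rann^{0+,v}_{0,i+3}$. Writing $h=\mathsf{g}(v)$ and using the specialization $\overline{\lambda}_k=-\lambda_k$, each color factor of $\mathsf{H}^v_h$ expands as $\frac{\prod_{j=1}^{h}(2\lambda_{\ell,v}-c_j)}{2\lambda_{\ell,v}}=\sum_{s_\ell\ge 0}(2\lambda_{\ell,v})^{\,h-s_\ell-1}(-1)^{s_\ell}e_{s_\ell}(c)$, so its leading term has color degree $h-1$ in color $\ell$ and its Hodge-degree-$s$ part ($s=\sum_\ell s_\ell$) carries residual factors $\prod_\ell\lambda_{\ell,v}^{-s_\ell}$ of floating length at most $s$. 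For the second input I would check, from Proposition \ref{type} and the genus-$0$ local theory, that $s_i$ has type $(0;i)$ and that the $n=3$ correlator $s_0$, which supplies the denominators $\frac{1}{1-t_1}$ of $\pP$, has type $(0;0)$. Since $\pP$ is homogeneous with $\deg t_i=i-1$, its value then has floating length equal to its $t$-degree; a dimension count on $\overline{M}_{h,\alpha+\beta+\bullet}$ gives this degree, for the Hodge-degree-$s$ part, as $3h-3+(\alpha+\beta)-s-\sum(a_i-1)-\sum(b_i-1+k_{j_i})$, and adding the length $\le s$ from $\mathsf{H}^v_h$ cancels $s$ to leave exactly $3h-3+N_v(\mathsf{A})+N_v(\mathsf{B})$, with color degree $\overline{h-1}$. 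This is (1).

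For (2) I would compute the remaining factors. Substituting WDVV \eqref{WDVV} into the edge factor gives $\text{Cont}_\Gamma^{\mathsf{A}}(e)=\big[e^{-\overline{\mathds{U}}_i/x-\overline{\mathds{U}}_j/y}\,\frac{\sum_k\overline{\mathds{S}}(\phi_k)|_{p_i,x}\overline{\mathds{S}}(\phi^k)|_{p_j,y}}{x+y}\big]_{x^{a_i-1}y^{a_j-1}}$; the exponentials cancel the $e^{\mathds{U}/z}$ of the Birkhoff form, and extracting $x^{a_i-1}y^{a_j-1}$ from $\frac{1}{x+y}\sum_{r,s}\mathds{R}_r\mathds{R}_s x^ry^s$ forces $r+s=a_i+a_j-1$, so the floating length is $\le a_i+a_j-1$, while the dual-basis relation $\phi^k=e_k\phi_k$ contributes one Euler class, hence color degree $\overline{1}$. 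For a leg carrying $\tau_{k_i}\mathsf{c}_i$, the same cancellation leaves $\lambda_{1,v_l}^{c_{1,i}}\cdots\lambda_{n,v_l}^{c_{n,i}}\mathds{R}_{b_l-1}$, of color vector $\mathsf{c}_i$ and floating length $\le b_l-1$. Summing over all factors by Lemma \ref{multiplicative}: in color $\ell$ the color degree is $\sum_v(\mathsf{g}(v)-1)+|E(\Gamma)|+\sum_i c_{i,\ell}$, which by $\sum_v(\mathsf{g}(v)-1)=g-1-|E(\Gamma)|$ equals $g-1+\sum_i c_{i,\ell}$; and, using $|F(\Gamma)|=2|E(\Gamma)|$, the vertex, edge and leg floating lengths telescope, the $\sum a_f$ and $\sum b_l$ terms cancelling, to $3g-3+m-\sum_i k_i$. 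This gives (2).

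The main obstacle is the length bookkeeping in (1): one must show the Hodge degree $s$ of $\mathsf{H}^v_h$ is exactly cancelled by the floating $\lambda$-length it produces, so that the bound is independent of $s$ and equals $3h-3+N_v(\mathsf{A})+N_v(\mathsf{B})$. This rests on the genus-$0$ input that $s_i$ has type $(0;i)$ and on the homogeneity of $\pP$. The second delicate point is pinning down the exact $\psi$-exponents in the edge and leg factors — in particular the $-1$ shifts producing $a_i+a_j-1$ and $b_l-1$ — so that the global sum telescopes precisely to $3g-3+m-\sum_i k_i$; together with the verification that each edge contributes color degree $\overline{1}$, this is where the care lies.
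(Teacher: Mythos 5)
Your proposal is correct and follows essentially the same route as the paper: expand the vertex insertion $\mathsf{H}^v_{\mathsf{g}(v)}$ in the Hodge Chern roots so that the $\lambda$-length carried by the coefficients exactly offsets the drop in correlator degree, use Proposition \ref{type} together with the WDVV identity \eqref{WDVV} to read off the edge type $(\overline{1};a_i+a_j-1)$ and the leg type $(\mathsf{c}_i;a-1)$, and then assemble everything with Lemma \ref{multiplicative}. Your explicit telescoping of the color degrees via $\sum_v(\mathsf{g}(v)-1)=g-1-|E(\Gamma)|$ and of the lengths to $3g-3+m-\sum_i k_i$ just fills in the bookkeeping the paper leaves as ``we can check.''
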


\begin{proof} For the first parts, if we expand 
 \begin{align*}
  &\text{Cont}_{\Gamma}^\mathsf{A}(v) = [\psi^{a_1-1},\psi^{a_2-1},...,\psi^{a_\alpha-1},\psi^{b_1-1+k_{j_1}},\dots,\psi^{b_\beta+k_{j_\beta}}|\mathsf{H}^{v}_{\mathsf{g}(v)}]^{0+,v}_{\mathsf{g}(v),\alpha+\beta}\\
  &=\sum_{l_1,l_2,\dots,l_{\mathsf{g}(v)} \ge 0} b_{l_1,l_2,...,l_{\mathsf{g}(v)}} [\psi^{a_1-1},\psi^{a_2-1},...,\psi^{a_\alpha-1},\psi^{b_1-1+k_{j_1}},\dots,\psi^{b_\beta+k_{j_\beta}}|c^{l_1}_1c^{l_2}_2...c^{l_\mathsf{g}(v)}_{\mathsf{g}(v)}]^{0+,v}_{\mathsf{g}(v),\alpha+\beta},
 \end{align*}
 where $(a_1,a_2,..., a_\alpha)$ are components of $\mathsf{A}$ associated to $v$ and $(b_1,\dots,b_\beta)$ are components of $\mathsf{B}$ associated to $v$.
 We can check the followings.
 \begin{itemize}
  \item $b_{l_1,l_2,\dots,l_{\mathsf{g}(v)}}$ has type $(\overline{\mathsf{g}(v)-1};\sum^{\mathsf{g}(v)}_{i=1} l_i)$,
  \item $[\psi^{a_1-1},\psi^{a_2-1},...,\psi^{a_\alpha-1},\psi^{b_1-1+k_{j_1}},\dots,\psi^{b_\beta+k_{j_\beta}}|c^{l_1}_1c^{l_2}_2...c^{l_g}_g]^v_{\mathsf{g}(v),k}$ has type $(\overline{0};3\mathsf{g}(v)_v-3-\sum^{\mathsf{g}(v)}_{i=1} l_i+\sum_{i=1}^{\alpha} (2-a_i)+\sum_{i=1}^{\beta}(2-b_i-k_{j_i}))$.
 \end{itemize}
The proof of first part follows from Lemma \ref{multiplicative}. 
 
For the second parts,  let $e$ be an edge connecting $v_i$ and $v_j$. Then, by Prop \ref{type} and \eqref{WDVV}, we can check that
\begin{align*}
&\text{Cont}_{\Gamma}^\mathsf{A}(e) = \left[(-1)^{k+l}e^{-\frac{\overline{\mathds{U}}^{\PP[n]}_i}{x}-\frac{\overline{\mathds{U}}^{\PP[n]}_j}{y}}e_i\overline{\mathds{V}}^{\PP[n]}_{ij}(x,y)e_j  \right]_{x^{k-1}y^{l-1}}\\
&=\sum_{i=1}^{l}(-1)^{k+l+i+1}\left[e^{-\frac{\overline{\mathds{U}}^{\PP[n]}_i}{x}-\frac{\overline{\mathds{U}}^{\PP[n]}_j}{y}}\sum_{k}\overline{\mathds{S}}^{\PP[n]}(\phi_k)|_{v_i,z=x}\, \overline{\mathds{S}}^{\PP[n]}(\phi^k)|_{v_j,z=y}
\right]_{x^{k-1+i}y^{l-i}}
\end{align*}
has type $(\overline{1};1+(k-1)+(l-1))$.  Here $(k,l)$ are components of $\mathsf{A}$ associated to flags $(v_i,e)$ and $(v_j,e)$. Let $l$ be the $i$-th leg with insertion $\tau_{k_i}\mathsf{c}_i$. Then similarly,
\begin{align*}
    \text{Cont}^{\mathsf{A}}_\Gamma(l)=\left[ e^{-\frac{\overline{\mathds{U}}^{\PP[n]}_{v_l}}{x}}\overline{\mathds{S}}^{\PP[n]}_{v_l}(\ot_j H_j^{c_{j,i}}) \right]_{x^{a}}
\end{align*}
has type $(c_{1,i},c_{2,i},\dots,c_{n,i};a-1)$, where $a$ is the component of $\mathsf{B}$ associated to $l$.
By multiplying all contributions of $\Gamma$, the proof of second part follows from Lemma \ref{multiplicative}.

\end{proof}

\

\subsection{Proof of Theorem \ref{Second}}
Consider the following decomposition of $F^{\mathsf{T}}_{g,m}[\tau_{k_1}\mathsf{c}_1,\dots,\tau_{k_m}\mathsf{c}_m]$.
\begin{align*}
 F^{\mathsf{T}}_{g,m}[\tau_{k_1}\mathsf{c}_1,\dots,\tau_{k_m}\mathsf{c}_m]|_{\lambda_i=1,\overline{\lambda}_i=-1} =\sum_{\lambda_{i,j}=\pm1} \sum_{\Gamma\in\mathsf{G}^{\text{ord}}_{g,0}} \text{Cont}_{\Gamma}.
\end{align*}

\noindent Each $\text{Cont}_{\Gamma}$ has type $(g-1+\sum_{i}c_{i,1},\dots,g-1+\sum_{i}c_{i,n};3g-3+m-\sum_{i=1}^m k_i)$ by Proposition \ref{TG}. Using Lemma \eqref{ev}, we obtain the following result for $n > 3g-3+m-\sum_{i=1}^m k_i$ and odd $g-1+\sum_{i}c_{i,j}$.

 \begin{align*}
 \sum_{\lambda_{i,j}=\pm 1} \text{Cont}_{\Gamma}=0.
 \end{align*}
Therefore we conclude 
\begin{align*}
 F^{\mathsf{T}}_{g,m}[\tau_{k_1}\mathsf{c}_1,\dots,\tau_{k_m}\mathsf{c}_m]|_{\lambda_i=1,\overline{\lambda}_i=-1}=\sum_{\lambda_{i,j}=\pm1} \sum_{\Gamma\in\mathsf{G}^{\text{ord}}_{g,0}} \text{Cont}_{\Gamma}=0,
\end{align*}
for $n > 3g-3+m-\sum_{i=1}^m k_i$ and odd $g-1+\sum_{i}c_{i,j}$.
The $\mathsf{q}$-coefficients in $F^{\mathsf{T}}_{g,m}[\tau_{k_1}\mathsf{c}_1,\dots,\tau_{k_m}\mathsf{c}_m]$ of degree $\mathsf{d}$ with virtual dimension $0$ are independent of $\boldsymbol{\lambda}$. Theroem \ref{Second} is an immediate consequence.


\begin{thebibliography}{99}









\bibitem{CK} I. Ciocan-Fontanine and B. Kim, {\em Moduli stacks of stable toric quasimaps,}  Adv. in Math. {\bf 225} (2010), 3022--3051.

\bibitem {CKg0}  I. Ciocan-Fontanine and B. Kim,
{\em Wall-crossing in genus zero quasimap theory and mirror maps,}   Algebr. Geom. {\bf 1 } (2014), 400--448.

\bibitem {CKg}  I. Ciocan-Fontanine and B. Kim, 
{\em Higher genus quasimap wall-crossing for semi-positive targets,}  arXiv:1308.6377, 



\bibitem{BigI}   I. Ciocan-Fontanine and B. Kim, {\em Big I-functions,} arXiv:1401.7417.



\bibitem{CKM} I. Ciocan-Fontanine, B. Kim, and D. Maulik,  {\em Stable quasimaps to GIT quotients,}
  J. Geom. Phys. {\bf 75} (2014), 17--47.




\bibitem{CKatz} D. Cox and S. Katz, {\em Mirror symmetry and algebraic geometry}, Mathematical Surveys and Monographs {\bf 68}: Amer. Math. Soc., Providence, RI, 1999.




\bibitem{FP} W. Fulton and R. Pandharipande, 
{\em Notes on stable maps and quantum cohomology}, Algebraic geometry -- Santa Cruz 1995, 
45--96, Proc. Sympos. Pure Math. {\bf 62}, Part 2: Amer. Math. Soc., Providence, RI, 1997. 

\bibitem{Gequiv} A. Givental, {\em Equivariant Gromov-Witten invariants}, Internat. Math. Res. Notices {\bf 13} (1996), 613--663.




\bibitem{Elliptic} A. Givental, {\em Elliptic Gromov-Witten invariants and the generalized mirror conjecture,}
 Integrable systems and algebraic geometry (Kobe/Kyoto, 1997), 107--155, World Sci. Publ., River Edge, NJ, 1998. 

\bibitem{SS} A. Givental, {\em Semisimple Frobenius structures
at higher genus}, Internat. Math. Res. Notices {\bf 23} (2001),  613--663.

\bibitem{GP}
T.~Graber, R.~Pandharipande,
\newblock {\em Localization of virtual classes}, Invent. Math. {\bf 135}
(1999),  487--518.


\bibitem{KL} B. Kim and  H. Lho, {\em Mirror theorem for elliptic quasimap invariants}, arXiv:1506.03196. To appear in Geometry and Topology.







\bibitem{Book} Y.-P. Lee and R. Pandharipande, {\em Frobenius manifolds, Gromov-Witten theory and Virasoro constraints,} https://people.math.ethz.ch/\~{}rahul/, 2004.



\bibitem{LP} H. Lho and R. Pandharipande,
{\em Stable quotients and the holomorphic anomaly equation}, arXiv:1702.06096.

\bibitem{MOP} A. Marian, D. Oprea, Dragos, R. Pandharipande, {\em The moduli space of stable quotients,} Geom. Topol. {\bf 15} (2011), 1651--1706.









\end{thebibliography}
\end{document}